\documentclass[11pt]{article}

\usepackage{graphicx}
\usepackage{epsfig}
\usepackage{epstopdf}
\usepackage{amssymb}
\usepackage{amsthm}

\newenvironment{keywords}{\flushleft {\bf Keywords.}}{ }

\def\P{{\cal P}}
\def\C{{\cal C}}
\def\U{{\cal U}}
\def\M{{\cal M}}
\def\I{{\cal I}}
\def\S{{\cal S}}
\def\R{{\cal R}}
\def\F{{\cal F}}

\def\H{{\cal H}}

\def\D{{\cal D}}
\def\L{{\cal L}}

\newcommand{\RR}{\mbox{I\hspace{-.08cm}R}}

\theoremstyle{plain}
\newtheorem{theorem}{Theorem}
\newtheorem{lemma}[theorem]{Lemma}
\newtheorem{proposition}[theorem]{Proposition}
\newtheorem{corollary}[theorem]{Corollary}

\theoremstyle{definition}
\newtheorem{definition}{Definition}
\newtheorem{example}{Example}

\theoremstyle{remark}
\newtheorem*{remark}{Remark}

\title{On the Actual Inefficiency of Efficient Negotiation Methods}

\author{\begin{tabular}[t]{c@{\extracolsep{1em}}c}
{\bf Luca Barzanti}  & {\bf Marcello Mastroleo}\\
Department of Mathematics&Department of Mathematics\\
for Economic and &for Economic and\\
Social Sciences&Social Sciences\\
University of Bologna & University of Bologna \\
\texttt{luca.barzanti@unibo.it} & \texttt{marcello.mastroleo@unibo.it}
\end{tabular}}

\begin{document}
\maketitle

\begin{abstract}
In this contribution we analyze the effect that mutual information has on the actual performance of efficient negotiation methods. Specifically, we start by proposing the theoretical notion of Abstract Negotiation Method (ANM) as a map from the negotiation domain in itself, for any utility profile of the parties. ANM can face both direct and iterative negotiations, since we show that ANM class is closed under the limit operation. 

The generality of ANM is proven by showing that it captures a large class of well known in literature negotiation methods like: the Nash's bargaining solution, all the methods derived by multi criteria decision theory and, in particular, the ones based on Lagrange multipliers, the Single Negotiating Text which was used in the Camp David Accords, the Improving Direction Method, and so on. 

Hence we show that if mutual information is assumed then any Pareto efficient ANM is manipulable by one single party or by a collusion of few of them. At this point, we concern about the efficiency of the resulting manipulation. Thus we find necessary and sufficient conditions those make manipulability equivalent to actual inefficiency, meaning that the manipulation implies a change of the efficient frontier so the Pareto efficient ANM converges to a different, hence actually inefficient, frontier. 

In particular we distinguish between strong and weak actual inefficiency. Where, the strong actual inefficiency is a drawback which is not possible to overcome of the ANMs, like the Pareto invariant one, so its negotiation result is invariant for any two profiles of utility which share the same Pareto frontier, we present. While the weak actual inefficiency is a drawback of any mathematical theorization on rational agents which constrain in a particular way their space of utility functions.

For the weak actual inefficiency we then state a principle of Result's Inconsistency by showing that 
to falsify theoretical hypotheses is rational for any agent which is informed about the preference of the other, even if the theoretical assumptions, which constrain the space of agents' utilities, are exact in the reality, {\it i.e.} the preferences of each single agent are well modeled. In essence we show that, under weak actual inefficiency assumption, any mathematical model which correctly capture the reality, it produces inconsistent results.
\end{abstract}

\begin{keywords}
Negotiation, Pareto Efficiency, Mutual Information, Manipulation, Collusion, Result's Inconsistency by Rationality.
\end{keywords}
\tableofcontents
\newpage

\begin{flushright}
\emph{\scriptsize{Engage people with what they expect; it is what they are able to discern and confirms their projections. It settles them into predictable patterns of response, occupying their minds while you wait for the extraordinary moment - that which they cannot anticipate. (Sun Tzu)}}
\end{flushright}
\vspace{2cm}
\section{Introduction}\label{Sec1}

The rise of efficiency by rationality is not trivial when the interaction between two or more Decision Makers (DM's) is considered. Clearly in finding a trade-off between parties preferences, the crucial role is always played by mutual information; for example, in \cite{Kersten2001}, the author shows how knowing mutual utilities allows to reach efficiency by means of the straightforward illustration of the ``orange''. 

Nevertheless, in a conflict situation the idea of sharing proper preferences with the opponents is impracticable, thus the rise of the necessity in having an extern trusted party which can find the best settlement by knowing everything of everyone.

For this reason in the last century, the problem of how to implement such extern trusted party has been widely approached under different perspectives giving rise to the Negotiation Analysis, an interdisciplinary effort which aims to draw a prescriptive theory for negotiators, see \cite{RaiffaCD} of Raiffa which is a pioneer of this field. 

Between all the theories of interest, the two most prolifically used are Game Theory and Multiple Criteria Decision Making, see \cite{Sebinius08} for a punctual survey of the last half century evolution of the Negotiation Analysis concept. 

In game theoretical approach to negotiation, see \cite{Brams:1990rp} for a wide range of negotiation techniques, there is not a real extern party but its function is instead achieved by the same negotiating parties which coordinates them selfs by means of a predetermined scheme of interaction called game. 

In the other approach, instead, the real extern party, which is called negotiator, privately collects informations related to individual preferences and then he implements a multi criteria decision making algorithm to find a ``win-win'' settlement which is Pareto efficient.

The choice of these two theory is natural since the first one models interaction dynamics while the second one trade-offs.

Moreover, due to the flourish of electronic transactions in the last decade, negotiation techniques have the new perspective to became automated services those can be worldwide offered to find Pareto efficient gains in several contexts see, for example: \cite{Teich94} for a classification of modeling aids, \cite{1174197} for a report on the development of interactive electronic negotiations in the supply chain, \cite{869133} for an artificial intelligence perspective of negotiating techniques, and \cite{Ehtamo200154} for a potentially universal negotiation platform.

Even if both game theoretical and multi criteria decision making approaches to negotiation achieve the same results, the techniques, which are based on the latter theory, are widely imposing as the best candidates to automated negotiations, since they open to the whole literature about optimization and control, due to their numerical and analytical root. Nevertheless there are tacit differences between the two problems and the most relevant is that in multi criteria decision making there is one single DM, who possesses multiple conflicting interests, while in negotiation each utility is associated to a different DM. This slight discrepancy is determinant in practices because a single rational DM has no incentive to be inefficient, while the same DM is pushed by rationality to pursue his own good even if this would imply a global inefficiency of the negotiation dynamic, whenever that global inefficiency leads him a payoff which is better than the one he would get in the efficient case. 

In fact rationality is a local property that is owned by each DM, which does all his best to pursuit his own good, whereas efficiency is a global property in the sense that it is referred to the outcome of their interaction. It is not difficult to see that fully rational DM's often fail to reach an efficient interaction if they negotiate without an external aid.

However the growth of computational power, that caused the born of electronic negotiations, is itself a potential danger for this field of applications; in fact it is not difficult to imagine a software specifically designed to exploit eventual lacks of a negotiation protocol. For example in \cite{RePEc:eee:ejores:v:194:y:2009:i:2:p:452-463}, the authors propose a volume based method to learn mutual information during the negotiation so to reach efficient settlements, which can be reversed engineered to be used, within any other negotiation method, by a party to learn mutual informations and to exploit them during the negotiation process. 

The rule of  mutual information is determinant and well studied, for example, in the Social Choice theory, which is close to negotiation, where the well-known result due to Arrow, in \cite{ArrowOriginal}, states that it is impossible to aggregate personal welfare functions efficiently and independently to irrelevant alternatives if the absence of a dictatorial party is assumed. Moreover Gibbard, in \cite{GibbardOriginal}, and Satterthwaite independently showed that by renouncing to the independence of irrelevant alternatives in seeking of non-dictatorship leads to manipulable welfares aggregations. Hence a party can obtain a better ending welfare by making-up his own declarations so to exploit the knowledge of the other social parties. 

Both results are originally formulated into a finite space of alternatives and other differences subsist between social choice and negotiation problems, in fact a negotiation can default and the {\it status quo} can be an inner point of the negotiation domain, for example in iterative negotiation methods or in the improving ones. However there are several evidences that Social Choice like statements could hold also within negotiation. In fact, it is well known that some of the most diffused Pareto efficient negotiation methods are sensible to manipulatory or collusive behaviors, like, for example, the Adjusted Winner procedure (\cite{Brams1995}), which is not truthful as detailed in \cite{RaiffaCD}, and the Improving Direction Method (\cite{740667}) which is proven to be both not truthful and information leaking in \cite{MastroleoAML}. 

Moreover in \cite{Mastroleo11}, the authors shows that if the {\it status quo} is exogenous to the negotiation domain, like in Social Choice theory, both Arrow's and Gibbard-Satterthwaite's results can be extended to negotiation over continuous issues, thus in this contribution we decided to explore what mutual information implies if the {\it status quo} is in the negotiation domain, like in iterative negotiation methods.

In the first part of Section \ref{Sec2} we fix the notions which we are going to use later in the section to define the class of Abstract Negotiation Methods. This general framework allows us to move the analysis of a negotiation methods from a prescriptive point of view to an analytical one, giving us the possibility to state general results about what can be really achieved in a negotiation process. In Subsection \ref{Sec2.1} we explore some notions of Pareto efficiency for ANMs and in Subsection \ref{Sec2.2} we show the equivalence between direct Pareto efficient, or ``one-shot'' Pareto efficient, and improving acyclic iterative negotiations by showing that each iterative ANM has a limit which is an ANM that achieve the same results in just one iteration. 

We show, in Section \ref{Sec3}, that the hypotheses, in particular the continuity ones, used to define ANMs are minimal since they are able to generalize a wide class of different approach to negotiations, namely: in Subsection \ref{Sec3.1} the Egalitarian and the Nash's Bargaining solution, as presented respectively in \cite{Kalai77} and \cite{Nash50}, the Multi-Criteria Decision Making strategies based on Lagrange Multipliers, like the ones used in \cite{Heiskanen99,Johanssona08}, in Subsection \ref{Sec3.2} 
and, in Subsection \ref{Sec3.4}, the Single Negotiating Text framework, introduced by Fisher in \cite{FisherCD}, and implemented in the Improved Direction Method which is developed in \cite{740667,Ehtamo200154}.

In Section \ref{Sec4} we investigate the connection between efficient negotiation methods and manipulability and we present a theorem which states that efficient $2$-party ANMs are always manipulable. The same theorem plays an important role in multi party negotiation when collusions are assumed. In Subsection \ref{Sec4.1} we give necessary and sufficient conditions under which a rational manipulation does not alter the real Pareto frontier of parties utilities and we show that both the bargaining solutions, discussed in Subsection \ref{Sec3.1}, satisfy this condition.

In Section \ref{Sec5} we show that there are methods which cannot be manipulated so to preserve real efficiency and in subsection \ref{Sec6} we study the conditions those make actual inefficient all the efficiently manipulable methods. We underline an issue due to constraining the utility's space, frequent practice in literature, which compromises the possibility to verify the truthfulness on the hypotheses which drive those constrains.

\section{Abstract Negotiation Methods}\label{Sec2}
In what follows, we assume that negotiation domain $\D\subsetneq \RR^m$ is closed, convex and bounded, where the dimension $m$ corresponds to the number of parameters those are going to be defined during the negotiation process. Moreover with $\D^\circ$ we indicate the interior of $\D$ and with $\partial\D$ its boundary.

For example in fund allocation problems, where a fixed maximum budget $B$ has to be split between $m$ possible alternatives, the negotiation domain is the set $\D = \{(x_1,\ldots,x_m)\in\RR^m|\sum{x_j} \leq B,\,x_j \geq 0\}$, which contains all the possible ways to split a quantity bounded by $B$, among the $m$ alternatives. 

We also assume that there are $n$ fully rational Decision Makers (DM's) which take part to the negotiation, where full rationality means that each party $i$ has both a well defined utility function $u_i:\D\rightarrow\RR$, {\it i.e.} super-level sets of $u_i$ are all closed, convex and they forms a decreasing succession with a unique global maximum $\hat{u}_i\in\D$, and the capability to pursue his own good. We indicate with $\U$ the set of all possible functions on $\D$ satisfying the aforementioned properties and a profile of utilities with $\vec{u}\in\U^n$.

With the next definitions we fix some notions those are going to be relevant in what follows.

\begin{definition}\label{Def:feasible}
Given a utility function $u\in\U$ and a point $p\in\D$, the regions of $\D$ those are \emph{improving}, \emph{strictly improving} and \emph{equivalent to} $p$ according to $u$, are $\F(u,p) = \{x\in\D|u(x) \geq u(p)\}$, $\F^+(u,p) = \{x\in\D|u(x) > u(p)\}$ and $\I(u,p) = \{x\in\D|u(x) = u(p)\}$, respectively.
\end{definition}

The same can be done for a profile of utility functions $\vec{u}\in\U^n$ by means of the intersection operator $\bigcap$ in the Borel algebra of $\D$ subsets.

\begin{definition}
Given a profile of utility functions $\vec{u}\in\U^n$ and a point $p\in\D$, the region of $\D$ that \emph{jointly} improves $p$ according to $\vec{u}$ is
$\F(\vec{u},p) = \bigcap\limits_{i=1}^{n} \F(u_i,p).$
In the same way, the regions of $\D$ those improves strictly and are equivalent to $p$ according to $\vec{u}$ are, respectively, $\F^+(\vec{u},p) = \bigcap\limits_{i=1}^{n} \F^+(u_i,p)$ and
$\I(\vec{u},p) = \bigcap\limits_{i=1}^{n} \I(u_i,p)$.
\end{definition}

In order to deal with negotiation axiomatically, we give the following general definition of negotiation method which does not concern about the actual procedure that allows the ending settlement to be reached. 

\begin{definition}[Abstract Negotiation Method]\label{Def:NegotiationMethod}
Given a negotiation domain $\D$, an $n$-party \emph{Abstract Negotiation Method} (ANM) is a map $\M:\U^n\times\D\rightarrow\U$.
\end{definition}

In this setting, the final settlement, or outcome, of a negotiation process, which implements $\M$ and starts by $x_0\in\D$, is naturally defined as the concave optimization problem
\begin{equation}\label{Eq:fins}
\S(\M,\vec{u},x_0)=\arg\max_{x\in\D} \M(\vec{u},x_0)(x).
\end{equation}
Clearly the maximization problem $\S$ is well defined since $\M(\vec{u},x_0)$ is in $\U$ and $\D$ is bounded, closed and convex. Furthermore the gap between the evaluation of $\M$ and the search of $\S$ is merely a numerical matter and several generalizations can be introduces within the space $\U$ without compromising the tractability of the global theory. For example, if the concavity is relaxed to quasi-concavity the problem can be still numerically solved by using relaxation techniques like the one introduced in \cite{Ortega70} or the concave equivalent of Graduated Non-Convexity approximation, see \cite{Blake87}.

This definition of negotiation model considers as equivalent all those actual negotiating procedures which achieve the same ending settlement whenever they are fed with the same profile of utilities and starting point as inputs. Thus allowing us to move the focus from a prescriptive point of view, which studies the way parties have to be coordinated by the negotiator, to an analytical one, which permits us to reason about what a negotiation process can actually achieve in terms of parties goods.

\begin{remark}
The presence of starting point $x_0$, which is endogenous to the negotiation domain $\D$, determines the possibility that the negotiation can default. In fact at least a party exists which rejects the negotiation outcome whenever $\S(\M,\vec{u},x_0)\not\in\F(\vec{u},x_0)$ otherwise $\S(\M,\vec{u},x_0)$ is improving everyone's utility so, by means of rationality, no party is going to reject the negotiation outcome. The condition under which a negotiation does never default, is the Pareto efficiency which is the topic of the next subsection.
\end{remark}

In order to avoid that the negotiation outcome is heavily affected either by small perturbations of the starting point or small changes of the utility profile, we introduce the following notion of continuity.

\begin{definition}\label{Def:UPC}
A negotiation method $\M$ on $\D$ is \emph{continuous} if and only if $\S(\M,\vec{u},\cdot)$ is a continuous map from $\D$ to itself, for any fixed $\vec{u}\in\U^n$; and a pseudometric $\mu$ on $\U^n$ exists for which $\S(\M,\cdot,x_0)$ is continuous form $\U^n$ to $\D$, for any fixed $x_0\in\D$.
\end{definition}

The choice of the pseudometric $\mu$ is sensible and its metric identification is directly connected to the invariance properties that usually arise in dealing with utility functions representing preference systems. In the following, we refers only to continuous ANM even if not directly stated.

An important property that a good negotiation method is likely to have is the \emph{fairness}; we refer to the weaker condition of \emph{symmetry} meaning that the ANM is invariant under any change of the parties order. 

\begin{definition}
An ANM $\M$ is \emph{symmetric} if and only if $$\S(\M,\vec{u},x) = \S(\M,\vec{u}_\sigma,x)$$ for every permutation $\sigma$ of the parties index.
\end{definition}


\subsection{Pareto Efficiency}\label{Sec2.1}

Let us enter in the main topic of this contribution by indicating with $\P(\vec{u})$ the set of all those points which cannot be further improved, formally
$$\P(\vec{u}) = \{x\in\D|\F(\vec{u},x) = \{x\}\},$$
and with $\H(C,x)$ the set of hyperplanes which support the convex set $C$ at its border point $x$. Then, the following proposition is a necessary and sufficient condition, in therms of supporting hyperplanes, for a point $x$ to lie on $\P(\vec{u})$, which we are going to use in the following sections.

\begin{proposition}
$x\in\P(\vec{u})$ if and only if $H \in \H(\F(\vec{u}_A,x)) \cap \H(\F(\vec{u}_{-A},x))$ exists which separate $\F(\vec{u}_A,x)$ and $\F(\vec{u}_{-A},x)$, for all $A\subset\{1,\ldots,n\}$ 
\end{proposition}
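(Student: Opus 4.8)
The plan is to reduce the statement to the classical separating hyperplane theorem for convex sets, after exploiting the identity that ties an arbitrary bipartition of the parties to the full improving region. The starting observation is that for every $A\subset\{1,\ldots,n\}$ one has
$$\F(\vec{u}_A,x)\cap\F(\vec{u}_{-A},x)=\bigcap_{i\in A}\F(u_i,x)\cap\bigcap_{i\notin A}\F(u_i,x)=\F(\vec{u},x),$$
so the two convex sets appearing in the statement always meet exactly in the improving region. Since each $\F(u_i,x)$ is closed and convex, so are $C_1:=\F(\vec{u}_A,x)$ and $C_2:=\F(\vec{u}_{-A},x)$, and the point $x$ lies on the boundary of each, sitting on the indifference surface $\I(u_i,x)$ of every party. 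The whole proof then amounts to translating ``$\F(\vec{u},x)=\{x\}$'' into a statement about disjointness of interiors, to which separation applies, and noting that any separating hyperplane is forced to pass through the common point $x$.

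For the forward implication, assume $x\in\P(\vec{u})$, i.e.\ $\F(\vec{u},x)=\{x\}$, and fix a bipartition. I would first identify the interior of a finite intersection of super-level sets as the intersection of the strict super-level sets, namely $\mathrm{int}\,C_1=\F^+(\vec{u}_A,x)=\bigcap_{i\in A}\F^+(u_i,x)$; this uses continuity of the $u_i$ together with the decreasing-succession property of their super-level sets, which forces the boundary of $\{u_i\geq u_i(x)\}$ to be exactly $\I(u_i,x)$. Now any $y\in\mathrm{int}\,C_1\cap C_2$ would satisfy $u_i(y)>u_i(x)$ for $i\in A$ and $u_i(y)\geq u_i(x)$ for $i\notin A$, hence $y\in\F(\vec{u},x)=\{x\}$; but $x\notin\F^+(u_i,x)$, so no such $y$ exists and $\mathrm{int}\,C_1\cap C_2=\emptyset$. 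The separating hyperplane theorem then yields $H$ with $C_1\subseteq H^-$ and $C_2\subseteq H^+$; since $x\in C_1\cap C_2\subseteq H^-\cap H^+=H$, the hyperplane passes through $x$ and supports both sets there, that is $H\in\H(C_1,x)\cap\H(C_2,x)$.

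For the converse I would argue by contraposition. Suppose $x\notin\P(\vec{u})$, so there is $y\neq x$ with $y\in\F(\vec{u},x)$, and set $A=\{i:u_i(y)>u_i(x)\}$. Using the strict quasi-concavity implicit in $\U$ (distinct points cannot share every indifference surface) one rules out the degenerate possibility $A=\emptyset$, so $A$ is a genuine index set. For this $A$ the point $y$ lies in $\F^+(\vec{u}_A,x)\cap\F(\vec{u}_{-A},x)=\mathrm{int}\,C_1\cap C_2$, whence $C_1$ and $C_2$ cannot be separated by any hyperplane: a separating $H$ would place $y$ strictly inside $H^-$ yet in $H^+$, a contradiction. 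In particular no common supporting hyperplane at $x$ separating $C_1$ and $C_2$ exists for this bipartition, which negates the right-hand side. Reading the two implications together gives the equivalence.

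The step I expect to be the main obstacle is the boundary/interior bookkeeping together with the degenerate bipartitions. Concretely, the identity $\mathrm{int}\,\bigcap_{i\in A}\F(u_i,x)=\bigcap_{i\in A}\F^+(u_i,x)$ must be justified from the stated hypotheses, since it can fail for general convex sets and here relies on each $\F(u_i,x)$ being full-dimensional with $x$ a non-maximal, hence boundary, point; when the intersection drops dimension one must instead invoke the relative-interior form of the separation theorem. Likewise the converse must dispose of the case in which $y$ improves no party strictly, which is exactly where the unique-maximum structure of $\U$ is needed to forbid distinct, mutually indifferent outcomes, and where the quantifier ``for all $A$'' does its work by letting us pick the bipartition that exposes the improving point.
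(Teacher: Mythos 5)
Your argument is essentially the paper's own: the proof there likewise reduces to observing that $\F(\vec{u}_A,x)$ and $\F(\vec{u}_{-A},x)$ are convex sets meeting exactly in $\F(\vec{u},x)$ and then invoking separability of convex sets with singleton intersection, all stated in a single sentence. You have merely made the two directions explicit, and the obstacles you flag (interior versus strict super-level set, lower-dimensional intersections, and the all-indifferent witness forcing $A=\emptyset$ in the converse) are exactly the degenerate cases that the paper's one-line proof silently passes over as well.
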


\begin{proof}
$\F(\vec{u}_A,x)$ and $\F(\vec{u}_{-A},x)$ are always convex intersecting at $x$; the condition of Pareto efficiency $\F(\vec{u}_A,x) \cap \F(\vec{u}_{-A},x) = \{x\}$ is then equivalent to their separability through an hyperplane.
\end{proof}

If the functions $u_i$'s are all concave then the Geoffrion's result, in \cite{Geoffrion68}, holds; thus the weighing method $<w,\vec{u}>$, when the weight vector $w$ varies over the set $W^+_n=\{\mathbf{w}\in\RR^n|w_i\geq 0 \forall i = 1,\ldots,n \mbox{ and } \sum w_i = 1\}$, characterizes all the points in $\P(\vec{u})$.

\begin{definition}
An ANM $\M$ is \emph{Pareto efficient} if and only if $\S(\M,\vec{u},x_0)\in\P(\vec{u})$, for all $(\vec{u},x_0)\in\U^n\times\D$.
\end{definition}

By definition a Pareto efficient settlement cannot be improved without penalizing at least a party, thus the importance of these points since they are terminal. In fact if a negotiation reaches a Pareto efficient point then there is at least a party which is going to reject a change of that settlement. In this sense Pareto efficient settlement can be considered fixed points under the action of the map $\M(\vec{u},\cdot):\D\rightarrow\D$.

Notice that Pareto efficient ANM are direct methods since they reach directly a fixed point.

\subsection{Iterative ANMs and Limit Efficiency}\label{Sec2.2}

In real negotiations, where implementability matters, there are several difficulties in dealing with direct Pareto efficient ANM, especially due to the necessity to express a closed form function representing a party's preferences. Hence direct methods often leave place to iterative ones, those allow to gradually reach the ending settlement by locally searching for joint gains, like in the single negotiating text (\cite{FisherCD}) or in the methods developed in \cite{Harri2001,Heiskanen99}. In spite of implementability, direct methods are widely used for theoretical proposes due to their tractability.

For these reasons in this Subsection we firstly introduce iterative ANMs and then we state necessary and sufficient conditions under which an iterative method has a direct equivalent which is Pareto efficient; thus we can still refer to direct methods, in place of iterative ones, to analyze what efficient negotiations can actually achieve.

\begin{definition}[Iterative ANM]\label{def:n-INP}
Given an ANM $\M:\U^n\times\D\rightarrow\D$, an {\it n-parties Iterated ANM} $\M^t$ (IANM) which implements $\M$ is a negotiation method that follows the sequent scheme for any given $x_0\in\D$:
\begin{enumerate}
\item $t\leftarrow 0$;
\item while $\S(\M,\vec{u},x_t)\not\in \{x_k\}_{k\leq t}$ and $u_i(\S(\M,\vec{u},x_t)) \geq u_i(x_t)$ for all $i\in\{1,\ldots,n\}$ do
\begin{enumerate}
\item[2.1.] $x_{t+1} \leftarrow \S(\M,\vec{u},x_t)$;
\item[2.2.] $t\leftarrow t+1$;
\end{enumerate}
\item $x^*\leftarrow x_t$
\item return $x^*$.
\end{enumerate}
\end{definition}

The first condition $\S(\M,\vec{u},x_t)\not\in \{x_k\}_{k\leq t}$, in line 2, ensures that the negotiation does not loop between two or more points that are considered equivalent by all parties thus avoiding the so called {\it bad faith} negotiation, {\it i.e.} when parties negotiate without a real improving intent. 

Whereas the second condition derives from the assumption of rationality which implies parties to continue the negotiation until at least one of them is actually penalized by the next contract.

Both this conditions can be relaxed by constraining the class of ANMs, thus requiring the negotiator to be some how smarter. 

\begin{definition}
An ANM $\M$ is {\it acyclic} if and only if, for all $\vec{u}\in\U^n$ and $x_0\in\D$ the condition $\S(\M,\vec{u},x_t)\not\in \{x_k\}_{k\leq t}$ is never satisfied for $x_t\not\in\P(\vec{u})$.
\end{definition}

\begin{definition}
An ANM $\M$ is {\it improving} if and only if, for all $\vec{u}\in\U^n$ the following inequality holds:
\begin{equation}\label{Eq:Improving}
\S(\M,\vec{u},x)\in\F(\vec{u},x),
\end{equation}
moreover $\M$ is {\it jointly improving} if and only if $\S(\M,\vec{u},x)\in\F^+(\vec{u},x)$, whenever $x\not\in\P(\vec{u})$.
\end{definition}

\begin{proposition}
If $\M$ is jointly improving then it is acyclic.
\end{proposition}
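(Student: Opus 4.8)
The plan is to argue by contradiction, exploiting the fact that joint improvement forces the entire utility vector to increase strictly at every non-Pareto iterate, so the iteration cannot return to a point it has already visited before reaching $\P(\vec{u})$. Operationally, acyclicity demands that the iteration of Definition \ref{def:n-INP} never halts through the repetition test $\S(\M,\vec{u},x_t)\in\{x_k\}_{k\le t}$ while $x_t\notin\P(\vec{u})$; so I would assume the opposite. Suppose $\M$ is jointly improving but not acyclic: then there are $\vec{u}\in\U^n$, a start $x_0\in\D$, and an index $t$ with $x_t\notin\P(\vec{u})$ for which $x_{t+1}:=\S(\M,\vec{u},x_t)\in\{x_k\}_{k\le t}$, say $x_{t+1}=x_j$ with $0\le j\le t$. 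Since $x_t\notin\P(\vec{u})$, the joint-improvement hypothesis gives $\S(\M,\vec{u},x_t)\in\F^+(\vec{u},x_t)$, that is
$$u_i(x_{t+1})>u_i(x_t)\qquad\text{for every }i.$$

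Next I would record that every consecutive pair among $x_j,x_{j+1},\dots,x_t$ weakly improves all parties. This is immediate from the while-loop of Definition \ref{def:n-INP}: the iteration advances from $x_k$ to $x_{k+1}=\S(\M,\vec{u},x_k)$ only when $u_i(\S(\M,\vec{u},x_k))\ge u_i(x_k)$ holds for all $i$, whence $u_i(x_j)\le u_i(x_{j+1})\le\cdots\le u_i(x_t)$ for each $i$. (Equivalently, one may note that a jointly improving method is in particular improving, so $\S(\M,\vec{u},x_k)\in\F(\vec{u},x_k)$ at each step.) Chaining these weak inequalities with the single strict step and the identity $x_{t+1}=x_j$ then yields, for each $i$,
$$u_i(x_j)=u_i(x_{t+1})>u_i(x_t)\ge u_i(x_j),$$
the desired contradiction; the degenerate case $j=t$ gives $u_i(x_t)>u_i(x_t)$ at once. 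Hence no repetition can occur at a non-Pareto point, and $\M$ is acyclic.

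The argument is short, so the only real care is in justifying the weak-monotonicity chain: one must make sure that the intermediate iterates $x_j,\dots,x_t$ are genuinely produced by an \emph{advancing} loop, so that the continuation test (and hence weak improvement) applies to each of them, rather than invoking joint improvement at points that might themselves be Pareto efficient, where only the weak inequality is available. I expect this bookkeeping—closing the chain of weak inequalities with the lone strict step around the cycle—to be the main, though modest, obstacle.
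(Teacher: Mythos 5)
Your argument is correct. Note that the paper states this proposition without any proof at all, so there is nothing to compare against; your contradiction argument---a strict improvement for every party at the non-Pareto iterate $x_t$, chained with the weak improvements $u_i(x_j)\le\cdots\le u_i(x_t)$ guaranteed by the while-loop continuation test, closing the cycle into $u_i(x_j)>u_i(x_j)$---is exactly the justification the authors leave implicit, and it correctly handles both the degenerate case $j=t$ and the fact that only the strict step needs $x_t\notin\P(\vec{u})$.
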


The following proposition allows us to relax the while loop conditions in case of at least improving ANMs.

\begin{proposition}
If $\M$ is at least improving and acyclic then the stop conditions are redundant.
\end{proposition}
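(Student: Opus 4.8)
The plan is to show that, under the hypotheses, the two guards of the while loop in Definition \ref{def:n-INP} collapse to the single requirement $x_t\not\in\P(\vec{u})$, so that neither of them can terminate the iteration before a Pareto efficient point is reached, nor keep it running afterwards. I would treat the two guards separately and then recombine them.

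First I would dispose of the rationality guard $u_i(\S(\M,\vec{u},x_t))\geq u_i(x_t)$ for all $i$. By hypothesis $\M$ is improving, so by definition $\S(\M,\vec{u},x_t)\in\F(\vec{u},x_t)$; unfolding $\F(\vec{u},x_t)=\{y\in\D\mid u_i(y)\geq u_i(x_t)\ \forall i\}$, this is literally the statement that $u_i(\S(\M,\vec{u},x_t))\geq u_i(x_t)$ holds for every $i$. Hence this guard is satisfied at every iteration and can never be the cause of a stop: it is redundant.

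Next I would analyse the anti-cycling guard $\S(\M,\vec{u},x_t)\not\in\{x_k\}_{k\leq t}$, distinguishing whether $x_t$ is Pareto efficient. If $x_t\in\P(\vec{u})$ then $\F(\vec{u},x_t)=\{x_t\}$ by the definition of $\P$, and since $\M$ is improving $\S(\M,\vec{u},x_t)\in\F(\vec{u},x_t)=\{x_t\}$, so $\S(\M,\vec{u},x_t)=x_t\in\{x_k\}_{k\leq t}$ and the guard fails: the loop stops, as it should, exactly at a fixed point. If instead $x_t\not\in\P(\vec{u})$ then acyclicity guarantees $\S(\M,\vec{u},x_t)\not\in\{x_k\}_{k\leq t}$, so the guard holds and the loop continues. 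Thus the anti-cycling guard holds if and only if $x_t\not\in\P(\vec{u})$.

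Combining the two observations, the conjunction of the two guards is equivalent to the single condition $x_t\not\in\P(\vec{u})$: the rationality guard is always true, and the anti-cycling guard already encodes the Pareto test. Hence both stop conditions are redundant, and the iteration can be driven by the bare requirement of not yet having reached a point of $\P(\vec{u})$. I expect the only delicate point to be the reading of the acyclicity definition — it must be taken to mean that a repeated iterate can occur only at a Pareto efficient point, equivalently that $x_t\not\in\P(\vec{u})$ forces $\S(\M,\vec{u},x_t)\not\in\{x_k\}_{k\leq t}$ — together with the fact, which I would not merely cite from the earlier remark but derive from the improving hypothesis and the definition of $\P$, that Pareto efficient points are genuine fixed points of $\M(\vec{u},\cdot)$.
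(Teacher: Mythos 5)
Your argument is correct and is exactly the (unstated) justification the paper relies on: the paper gives no proof of this proposition, and your two observations — that the improving hypothesis makes the guard $u_i(\S(\M,\vec{u},x_t))\geq u_i(x_t)$ automatic, while acyclicity plus the fact that $x\in\P(\vec{u})$ forces $\S(\M,\vec{u},x)=x$ reduces the anti-cycling guard to the test $x_t\notin\P(\vec{u})$ — are precisely what the surrounding text presupposes. You were also right to flag the acyclicity definition: as printed it literally asserts the opposite of what is intended, and your reading (a repeated iterate can occur only at a Pareto efficient point) is the only one under which the proposition, and later Theorem \ref{Thm:limit}, make sense.
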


Nevertheless we decide to do not suppress the utility not-penalizing condition because parties should always have the right to stop the negotiation process and moreover its redundancy vanish whenever parties declare false statements to the negotiator, hence its necessity is going to be evident from Section \ref{Sec4} when strategical behavior about declarations is analyzed. 

The next proposition shows that the last definitions are not redundant.

\begin{proposition}
Any direct Pareto efficient ANM $\M$ is improving and acyclic. Moreover if $\M$ is symmetric then it is jointly improving
\end{proposition}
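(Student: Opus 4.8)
The plan is to fix $\vec{u}\in\U^n$ and $x\in\D$, abbreviate $y=\S(\M,\vec{u},x)$, and exploit two ingredients in tandem: Pareto efficiency, which guarantees $y\in\P(\vec{u})$, and the rationality/non-default convention recorded in the Remark, namely that an outcome with $u_i(y)<u_i(x)$ for some $i$ is rejected by party $i$. To prove that $\M$ is improving I would argue that, since the output of a Pareto efficient method is the realized (hence accepted) settlement, no party is left below its status-quo payoff, so $u_i(y)\ge u_i(x)$ for all $i$, i.e. $y\in\F(\vec{u},x)$. The delicate point I would flag explicitly is that Pareto efficiency alone only places $y$ on the frontier and does not prevent a method from ``jumping'' to a frontier point that is worse for some party than $x$; it is precisely the individual-rationality clause (``a negotiation defaults whenever $\S(\M,\vec{u},x_0)\not\in\F(\vec{u},x_0)$'') that upgrades $y\in\P(\vec{u})$ to $y\in\F(\vec{u},x)$.

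For acyclicity I would use the one-step nature of a direct method. Because $y=\S(\M,\vec{u},x_0)\in\P(\vec{u})$ for every starting point, the very first iterate already lands on the frontier. Hence, if $x_0\not\in\P(\vec{u})$ then $y\neq x_0$ (they have different Pareto status), so the proposed successor is genuinely new and the revisiting stop condition $\S(\M,\vec{u},x_t)\in\{x_k\}_{k\le t}$ is not triggered at the only non-Pareto point that can be visited, namely $x_0$ itself; for every later iterate one has $x_t\in\P(\vec{u})$, so the clause restricting attention to $x_t\not\in\P(\vec{u})$ is vacuous. Thus the revisiting stop never fires off the frontier, which is exactly acyclicity, and this part needs only Pareto efficiency.

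For the symmetric case I would first promote ``improving'' to ``at least one strict improvement''. Suppose $x\not\in\P(\vec{u})$ and, for contradiction, $u_i(y)=u_i(x)$ for every $i$; then $x\in\I(\vec{u},y)\subseteq\F(\vec{u},y)$. If $y\neq x$ this gives $\F(\vec{u},y)\supsetneq\{y\}$, contradicting $y\in\P(\vec{u})$; if $y=x$ then $x\in\P(\vec{u})$, contradicting $x\not\in\P(\vec{u})$. So some party improves strictly, and moreover (using strict quasi-concavity of the $u_i$) the midpoint of $x$ and any $z\neq x$ in $\F(\vec{u},x)$ lies in $\F^+(\vec{u},x)$, so a simultaneously strictly-improving point does exist. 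It remains to upgrade ``some party strict'' to ``all parties strict'' using symmetry, and this is the step I expect to be the main obstacle. The intended idea is that a symmetric method cannot single out one party to freeze at equality while strictly improving another: relabeling via a transposition $\sigma$ leaves the outcome fixed, $\S(\M,\vec{u}_\sigma,x)=\S(\M,\vec{u},x)=y$, so no party can occupy a distinguished ``tied'' role.

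The difficulty I anticipate is that pure label-invariance for a single fixed profile does not, by itself, forbid a tie $u_j(y)=u_j(x)$ while $u_k(y)>u_k(x)$, since the transposed profile simply reassigns the tie to another index consistently. To close this cleanly I would have to combine symmetry with the convex structure of $\D$ and the strict quasi-concavity of the utilities—moving from $y$ into $\F^+(\vec{u},x)$ along a direction common to all parties and playing this against either the Pareto optimality of $y$ or the permutation-invariance of the realized outcome—rather than resting on the informal slogan that ``symmetry forbids singling out a party''. Making this last implication rigorous, and pinning down exactly which extra regularity of $\M$ it requires, is the crux of the proposition.
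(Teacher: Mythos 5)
Your treatment of the first two claims is sound and, if anything, more careful than the paper's: you are right that Pareto efficiency by itself only places $y=\S(\M,\vec{u},x_0)$ on $\P(\vec{u})$ and that the upgrade to $y\in\F(\vec{u},x_0)$ rests on the non-default convention recorded in the Remark (the paper dismisses this whole half as ``trivial by looking at the definition of Pareto efficiency''). Your acyclicity argument --- the first iterate already lands on $\P(\vec{u})$, so the revisiting clause can only be tested at $x_0$, where $y\neq x_0$ whenever $x_0\notin\P(\vec{u})$ --- is exactly what is needed. (One small caveat: your midpoint argument for the nonemptiness of $\F^+(\vec{u},x)$ invokes strict quasi-concavity, which the paper's definition of $\U$ does not actually guarantee; but this is peripheral.)

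The genuine gap is the one you name yourself: you never close the implication ``symmetric $\Rightarrow$ jointly improving''. You correctly reduce it to excluding a profile in which some party $j$ is tied, $u_j(y)=u_j(x)$, while another improves strictly, and you correctly observe that invariance of the outcome under permuting $\vec{u}$ cannot by itself exclude this for a single profile, since the permuted profile simply carries the tie to another index. That is where the proof stops being a proof. For comparison, the paper argues the contrapositive: if $\M$ is improving but not jointly improving, then at the offending $(\vec{u},x_0)$ the tied party $i$ is effectively dropped --- the paper asserts that $\M(\vec{u},x_0)$ factors as an $(n-1)$-party method $\tilde\M(\vec{u}_{-i},x_0)$ multiplied by the characteristic function of $\F(\vec{u},x_0)$ --- and concludes that such a degenerate method is ``clearly'' not symmetric. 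That argument is about the structure of the method rather than about relabelling a single profile, which is how it evades your obstruction; but the factorization is asserted rather than proved, and the closing ``clearly not symmetric'' is no more rigorous than the slogan you declined to rely on. So your proposal is incomplete at precisely the step the paper also leaves essentially unproved; to finish along the paper's lines you would need to show that a tie for party $i$ forces the outcome to be insensitive to $u_i$ at $(\vec{u},x_0)$, and then exhibit a permutation under which that insensitivity changes the settlement.
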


\begin{proof}
The first part is trivial by looking at the definition of Pareto efficiency. Let us so consider a Pareto efficient ANM $\M$ which is not jointly improving then, for every $x_0\in\D$, at least an $i$ exists for which $u_i(\S(\M,\vec{u},x_0)) = u_i(x_0)$, meaning that an $(n-1)$-party $\tilde\M$ exists and $\M(\vec{u},x_0) = \tilde\M(\vec{u}_{-i},x_0)\cdot I(\F(\vec{u},x_0))$, with $I(\F(\vec{u},x_0))$ the characteristic function of $\F(\vec{u},x_0)$. Clearly $\M(\vec{u},x_0)$ is not symmetric.
\end{proof}

\begin{remark}
Being jointly improving does not implies symmetry and a simple counterexample is a weighted version of the Nash's bargaining solution $$\M(\vec{u},x_0) = (u_1(x)-u_1(x_0))^{\alpha_1}\cdot(u_2(x)-u_2(x_0))^{\alpha_2},$$ for any $\alpha_1,\,\alpha_2>0$ with $\alpha_1 + \alpha_2 = 1$ and $\alpha_i\neq1/2$.
\end{remark}

By indicating with $|\F(\vec{u},x_0)|$ the finite value of $\int \F(\vec{u},x_0) dx$, which is the measure of the feasible region of $\vec{u}$ at $x_0$, the following limit theorem partially reverses the previous proposition and it allows us to consider only Pareto efficient ANM, thus to remove the complication that iteration represents.

\begin{theorem}\label{Thm:limit}
$\M^t$ always converges to a Pareto efficient settlement if and only if $\M$ is at least improving and acyclic.
\end{theorem}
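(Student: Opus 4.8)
The plan is to prove the two implications separately, working throughout with the sequence of iterates $x_{t+1}=\S(\M,\vec{u},x_t)$ produced by the scheme of Definition~\ref{def:n-INP} and recording two structural facts at the outset. First, if $\M$ is improving then $u_i(x_{t+1})\geq u_i(x_t)$ for every party $i$, so each utility is non-decreasing along the run; since $\D$ is bounded the utilities are bounded, and the scalar sequences $\{u_i(x_t)\}_t$ converge to limits $\bar u_i$. Second, monotonicity of the utilities nests the feasible regions, $\F(\vec{u},x_{t+1})\subseteq\F(\vec{u},x_t)$, so the measures $|\F(\vec{u},x_t)|$ form a non-increasing sequence bounded below by $0$ and hence converge; moreover $\bigcap_t\F(\vec{u},x_t)=\{y\in\D\,|\,u_i(y)\geq\bar u_i\ \forall i\}$ is a nonempty compact convex set, being a nested intersection of the nonempty compacta $\F(\vec{u},x_t)$.

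For the sufficiency direction ($\Leftarrow$), assume $\M$ is improving and acyclic. Improving makes the non-penalizing clause of line~2 hold automatically at every step, so the run can terminate only through the cycling clause $\S(\M,\vec{u},x_t)\in\{x_k\}_{k\leq t}$; by acyclicity this occurs only at a point of $\P(\vec{u})$, which disposes of runs that halt in finitely many steps. For an infinite run I would pass to the limit: by compactness the iterates admit an accumulation point $\bar x$, which lies in $\bigcap_t\F(\vec{u},x_t)$ and satisfies $u_i(\bar x)=\bar u_i$, so that $\bigcap_t\F(\vec{u},x_t)=\F(\vec{u},\bar x)$. Continuity of $\M$ (Definition~\ref{Def:UPC}) gives $\S(\M,\vec{u},\bar x)=\lim_j \S(\M,\vec{u},x_{t_j})$, a point whose utilities all equal $\bar u_i$; feeding $\bar x$ back as a starting point, the first iterate reproduces a point equivalent to $\bar x$, and acyclicity forbids this unless $\bar x\in\P(\vec{u})$. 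Hence $\bar x$ is Pareto efficient, the limiting feasible region collapses to $\{\bar x\}$, and the whole sequence converges to $\bar x\in\P(\vec{u})$.

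For the necessity direction ($\Rightarrow$) I would argue by contraposition, exhibiting for each failed hypothesis an input on which $\M^t$ settles outside $\P(\vec{u})$. If $\M$ is not improving there is a pair $(\vec{u},x)$ with $\S(\M,\vec{u},x)\notin\F(\vec{u},x)$, so some party is strictly penalized; starting the run at $x_0=x$ makes the non-penalizing clause fail at $t=0$ and the process halts at $x^*=x$. Taking the witness with $x\notin\P(\vec{u})$ yields $x^*\notin\P(\vec{u})$ (a violation at an efficient point, where $\F(\vec{u},x)=\{x\}$, merely halts the run harmlessly inside $\P(\vec{u})$, so such violations are immaterial to convergence). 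If instead $\M$ is not acyclic there exist $\vec{u},x_0$ and an index $t$ with $x_t\notin\P(\vec{u})$ but $\S(\M,\vec{u},x_t)\in\{x_k\}_{k\leq t}$; the run then cycles and returns $x^*=x_t\notin\P(\vec{u})$. In either case $\M^t$ fails to converge to a Pareto efficient settlement, which is the desired contrapositive.

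The main obstacle is the infinite-run analysis in the sufficiency direction: showing not merely that accumulation points are fixed and efficient, but that $\bigcap_t\F(\vec{u},x_t)$ is a single point, so that the entire sequence converges rather than oscillating among mutually indifferent fixed points. The delicate step is ruling out drift along a joint indifference set $\I(\vec{u},\bar x)$, where all utilities stay constant while the iterate keeps moving to genuinely new points; here I would combine acyclicity with the convergence of the measures $|\F(\vec{u},x_t)|$ and with the strict quasi-concavity encoded in the hypothesis that the super-level sets of each $u_i$ form a decreasing family with a unique maximum, which confines such indifference sets and pins down the limit. Making this final step fully rigorous, rather than the bookkeeping of the necessity direction, is where the real work lies.
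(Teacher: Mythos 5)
Your proof follows essentially the same route as the paper's: the sufficiency direction argues by contradiction through the limit of the feasible-region measures $|\F(\vec{u},x_t)|$, uses compactness of $\D$ and continuity of $\S(\M,\vec{u},\cdot)$ to produce a fixed point outside $\P(\vec{u})$ and contradicts acyclicity, while the necessity direction is the same bookkeeping about the stop conditions of Definition~\ref{def:n-INP}. The ``delicate step'' you flag at the end --- upgrading accumulation points to convergence of the whole sequence and collapsing $\bigcap_t\F(\vec{u},x_t)$ to a singleton --- is glossed over in the paper's own one-line proof as well, so your write-up is, if anything, the more careful of the two.
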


\begin{proof}
$(\Rightarrow)$ If $\M^t$ converges to a Pareto efficient settlement then the stop conditions at line 2 in Definition \ref{def:n-INP} never hold; thus $\S(\M,\vec{u},x)\in\F(\vec{u},x)$ and $\M^t$ does not cycle.  

\noindent $(\Leftarrow)$ Let $\M$ be an improving and acyclic ANM and $\M^t$ its iteration, then let us assume by absurd that for a fixed $(\vec{u},x_0)\in\U^n\times\D$ an $m>0$ exists such that $\lim_{t\rightarrow\infty} |\F(\vec{u},x_t)| > m$, thus $x^*\not\in\P(\vec{u})$ and $\S(\M,\vec{u},x^*) = x^*$, since $\S(\M,\vec{u},\cdot):\D\rightarrow\D$ is continuous and $\D$ bounded, closed and convex thus compact, which contradicts the hypothesis that $\M$ is acyclic.
\end{proof}

\section{Remarkable Examples}\label{Sec3}

In this Section we demonstrate the generality of continuous ANMs by showing that they fit some of the well known in the literature negotiating procedures. 

The first example is trivial and has the only propose to show that a dummy method which returns always the same point regardless its inputs is an ANM.

\begin{example}\label{Ex:Dummy}
Let us consider in a negotiation domain $\D$ a point $s^*\in\D$, then the {\it dummy} negotiation method is $$\M_{s^*}(\vec{u},x_0) = \sup\limits_{s\in\D}d(s,s^*)^2 - d(x,s^*)^2.$$
Clearly $\M_{s^*}$ is constant in both its parameters, thus it is continuous with respect to both utility profiles and starting points. Clearly $\M_{s^*}$ is not Pareto efficient and it is going to default whenever $s^*\not\in\F(\vec{u},x_0)$.
\end{example}

The next examples are less trivial and they represent, by looking at the different ways to approach negotiation problems, some of the most specialized methods.

\subsection{Bargaining Solutions}\label{Sec3.1}
Let $\D^k_2=\{x\in\RR^2|x_1,x_2\geq0,\,x_1+x_2\leq k\}$ represents the space of all possible splits among two parties of a certain finite good. 

We can consider the {\it egalitarian bargaining solution} (see \cite{Kalai77}) in $\D$
\begin{equation}\label{Eq:EgalitarianBargaining}
\texttt{e}(\vec{u},x_0) = \arg\max\limits_{x\in \D} \min(u_1(x)-u_1(x_0),u_2(x)-u_2(x_0)),
\end{equation}
where $x_0$ represents an already agreed split, the so called {\it status quo}, that both parties want to improve. This solution can be clearly achieved by implementing the ANM
\begin{equation}\label{Eq:EgalitarianBargainingANM}
\M_\texttt{e}(\vec{u},x_0) = \min(u_1(x)-u_1(x_0),u_2(x)-u_2(x_0))
\end{equation}
whose ending settlement according to Equation \ref{Eq:fins} is exactly the egalitarian bargaining solution. Moreover the continuity of $\M_\texttt{e}$ is trivial with respect to the starting point $x_0$ and the continuity with respect to the utility profile $\vec{u}$ can be proven by fixing a point $\bar x \in \D$ and using the translation invariant pseudometric on $\U$
$$\mu_\texttt{e}(u,v) = \sup\limits_{x\in\D} |u(x) - v(x) - (u(\bar x) - v(\bar x))|.$$

We can also consider the {\it Nash's bargaining solution} (see \cite{Nash50})
\begin{equation}\label{Eq:NashBargaining}
\texttt{n}(\vec{u},x_0) = \arg\max\limits_{x\in\D}(u_1(x)-u_1(x_0))\cdot(u_2(x)-u_2(x_0)),
\end{equation}
where, again, $x_0$ is again the {\it status quo}. $\texttt{n}(\vec{u},x_0)$ can be accomplished by using the following
\begin{equation}\label{Eq:NashBargainingANM}
\M_\texttt{n}(\vec{u},x_0) = (u_1(x)-u_1(x_0))\cdot(u_2(x)-u_2(x_0)),
\end{equation}
which is clearly continuous with respect to the {\it status quo} $x_0$ and, by fixing $\bar x,\,\bar{\bar x}\in\D$, the affine invariant pseudometric on $\U$
$$\mu_\texttt{n}(u,v) = \sup\limits_{x\in\D} |(v(\bar x)-v(\bar{\bar x}))\cdot (u(x) - u(\bar x)) - (u(\bar x)-u(\bar{\bar x}))\cdot (v(x) - v(\bar x))|$$
ensures utility profiles continuity of $\M_\texttt{n}$.

\subsection{Lagrange Multipliers}\label{Sec3.2}
In \cite{Heiskanen99}, Heiskanen provided an iterative method which uses Lagrange multipliers to reach Pareto efficient settlements in the negotiation over continuous issues with concave utility functions. Later, in \cite{Johanssona08}, the authors presented a negotiation algorithm to compute optimal consensus point in linear utility spaces which also relays on Lagrange multipliers.

In general the strategy behind all negotiation methods which are based on Lagrange multipliers, is to transform the original negotiation problem
\begin{equation}\label{Eq:LagrangeOri}
\left\{
\begin{array}{l l}
\arg\max\limits_{x\in\D} f(\vec{u_i})(x) &    \\
<x,\nabla u_i(x_0)> \geq 0 &  1\leq i\leq n
\end{array}
\right.
\end{equation}
where $f:\RR^m\rightarrow\R$ is an aggregating function, for example in \cite{Heiskanen99} $f(\vec{u}) = <w,\vec{u}>$ with $w \in W^+_n$, into a new one
\begin{equation}
\L: \,\arg\max\limits_{x\in\D} f(\vec{u})(x) + \sum\lambda_i\cdot <x,\nabla u_i(x_0)>
\end{equation}
which has a strictly concave objective and the solution of the problem is Pareto optimal, meaning that Karush--Kuhn–-Tucker conditions hold, for a suitable choice of $\lambda_i$'s.

Clearly $\M_\L(\vec{u},x_0) = f(\vec{u}) + \sum\lambda_i\cdot <x,\nabla u_i(x_0)>$ is an ANM whose solution is exactly like the above one and its continuity is trivial with respect to both $x_0$ and $\vec{u}$ choices.

Notice that both the bargaing solutions of the previous example can be expressed as a Lagrange multipliers Problem.
\subsection{Improving Direction Method}\label{Sec3.4}

In \cite{740667, Ehtamo200154, Harri2001}, the authors proposed a jointly improving negotiation method, aligned with the single negotiating text (see \cite{FisherCD,FisherUryCD,RaiffaCD}), that requires little more than local evaluations of $\nabla u_i$, to find an improving settlement. 

The map, that the {\it improving direction method} proposes, can be written as $\texttt{idm}(\vec{u},x) = l(\vec{u},x,g(\vec{u},x))$, where $g: \U^n\times\D\rightarrow B^m$ associates to any point of $\D$ the generalized bisector of the angle spanned by $F(u_i,x)$'s in $x$, according to the solution of the following product maximization problem, see \cite{Ehtamo200154} for details,
\begin{equation}\label{Eq:G3}
g(\vec{u},x)=
\left\{
\begin{array}{l l}
\max\limits_d \prod\limits_{i = 1}^n (\nabla u_i(x)/||\nabla u_i(x)||, d) &    \\
d_i(x_t)\cdot d \geq 0 &  1\leq i\leq n  \\
d \in F(\vec{u},x)  & \\
||d||^2 = 1     
\end{array}
\right.
\end{equation}
and $l:\U^n\times\D\times B^m\rightarrow \D$ evaluates the maximum step that can be done in $g(\vec{u},x)$ direction without penalizing any of the $P_i$'s, formally
\begin{equation}\label{Eq:L}
l(\vec{u},x,d) = x + \left(\min_i\left(\arg\max_{\lambda_i}u_i(x+\lambda_i\cdot d)\right)\right) \cdot d.
\end{equation}

In this case, differently from the ones above, the method has not a closed functional form since it is already an optimization method, hence we skip the definition of the ANM $\M_{\texttt{idm}}$ which implements IDM, since it can always be written as the dummy ANM centered in $\texttt{idm}(\vec{u},x)$, and we directly analyze the continuity of $\S(\M_{\texttt{idm}}, \vec{u}, x) = \texttt{idm}(\vec{u},x)$. 

$\S(\M_{\texttt{idm}}, \vec{u}, x)$ is clearly continuous as map of $\D$ in itself for any fixed $\vec{u}$ but proving that it is utility profile continuous need some calculation. Thus let us consider the preference invariant relation $p(u,v) \Leftrightarrow \leq_u = \leq_v$ and the preference invariant pseudometric 
\begin{equation}
\mu_{\texttt{idm}}(u,v) = \sup_{x\in\D,\atop {||\nabla u(x)||\neq 0\atop||\nabla v(x)||\neq 0}} 1 -  \left(\frac{(\nabla u(x),\nabla v(x))}{||\nabla u(x)||\cdot||\nabla v(x)||}\right)^2 
\end{equation}
which, in essence, controls the maximum orthogonal component between the normalized gradients of $u$ and $v$; $\mu_{\texttt{idm}}$ is well defined in the space of continuously differentiable utility function, which the authors consider, and it constrains the solution of both equation \ref{Eq:G3} and \ref{Eq:L}, thus ensuring that $\texttt{idm}$ is the ending settlement of a continuous ANM.   

\begin{remark}
IDM is an iterative method which is jointly improving and by means of Theorem \ref{Thm:limit} we know that it exists a direct Pareto efficient method which achieve the same result.
\end{remark}

\section{Pareto Efficiency Implies Manipulability}\label{Sec4}

In this Section we show that Pareto efficiency always implies manipulability whenever the {\it status quo} is endogenous to the negotiation domain $\D$. 

\begin{definition}
An ANM $\M$ is \emph{manipulable} if and only if for every $\vec{u}\in\U^n$ and $x_0$ at least an $i\in\{1,\ldots,n\}$ and an $\tilde{u}_i\in\U$ exist such that
$$u_i(\S(\M,\vec{u}_{-i},\tilde{u}_i,x_0)) > u_i(\S(\M,\vec{u},x_0)).$$ 
\end{definition}

\begin{proposition}
If $\U'\subset\U$ and $\M$ is manipulable within $\U'^n$ then it is manipulable also in $\U^n$.
\end{proposition}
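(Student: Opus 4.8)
The plan is to reduce the statement to a monotonicity principle: since $\U'\subseteq\U$, enlarging the space from which a manipulating declaration may be drawn can only make manipulation easier. I therefore expect no construction at all, merely the observation that a witness valid in the smaller setting remains valid in the larger one.

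First I would fix a profile $\vec u$ and a starting point $x_0\in\D$ to which the hypothesis applies; manipulability within $\U'^n$ then supplies a party $i$ and a declaration $\tilde u_i\in\U'$ with $u_i(\S(\M,\vec u_{-i},\tilde u_i,x_0)) > u_i(\S(\M,\vec u,x_0))$. The key step is to note that this is verbatim the inequality demanded by manipulability in $\U^n$, because $\tilde u_i\in\U'\subseteq\U$ is an admissible declaration there as well. Since the map $\M$---and hence the outcome operator $\S$ of Equation \ref{Eq:fins}---is unchanged and is evaluated only on the functions actually fed to it, both outcomes $\S(\M,\vec u_{-i},\tilde u_i,x_0)$ and $\S(\M,\vec u,x_0)$ coincide with their values in the smaller setting; the strict inequality is preserved with no recomputation, and the pair $(i,\tilde u_i)$ witnesses manipulability in $\U^n$.

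The hard part will not be technical but interpretive: one must pin down exactly what ``manipulable within $\U'^n$'' quantifies over, i.e. whether the restriction to $\U'$ bounds the admissible declarations $\tilde u_i$, the honest profiles $\vec u$, or both. The clean monotone implication I sketched holds precisely when the inclusion is read on the space of \emph{declarations} (a shrinking of the available manipulating moves), so I would state this reading explicitly at the outset; under it the argument collapses to the single containment $\U'\subseteq\U$, and the only thing genuinely worth checking is that neither $\M$ nor $\S$ depends on the ambient function space beyond the arguments supplied to them, which is immediate from Definition \ref{Def:NegotiationMethod} and Equation \ref{Eq:fins}. If instead one also enlarged the \emph{honest} profiles, the universal quantifier over $\vec u$ would make the conclusion strictly stronger than the hypothesis and a containment argument alone would not close the gap---one would then have to lean on the continuity of $\S$ from Definition \ref{Def:UPC} together with a density of $\U'$ in $\U$. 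Flagging this distinction, and restricting to the declaration reading where it is harmless, is the real substance of the proof.
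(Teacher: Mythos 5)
Your argument is correct and is essentially the paper's own: the authors offer no proof at all, remarking only that the proposition is ``trivially true,'' and the intended justification is exactly your monotonicity observation that a witness $(i,\tilde u_i)$ with $\tilde u_i\in\U'\subseteq\U$ remains a valid witness when declarations range over $\U$. Your interpretive caveat is not pedantry, though --- it is the real issue. The paper's Definition of manipulability quantifies universally over honest profiles $\vec u$, and the proposition is later invoked to upgrade Theorem \ref{Thm:2man}, which is proved only for profiles in $\U'\times\U'$ (the proof there uses that each $\hat u_i$ lies on $\partial\D$), to a claim about all of $\U^n$. That is precisely the reading under which, as you note, containment alone does not close the gap: profiles in $\U^n\setminus\U'^n$ receive no witness from the hypothesis, and one would need an additional density or continuity argument that the paper never supplies. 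So your proof matches the paper's on the harmless reading, and your flag identifies a genuine looseness in how the paper actually uses the statement.
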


This proposition is trivially true, but it allows us to restrict to 
$$\U' = \{u\in\U|\arg\max_{s\in\D}u_i(s)\in\partial\D\}.$$
We call $C_i(\M,\vec{u}_{-i},x_0)$ the set of possible manipulation outcomes of the $i$-th party within $\U'$, formally: 
$$C_i(\M,\vec{u}_{-i},x_0) =  \{x\in\D|\exists u_i\in\U' \mbox{ such that } x=\S(\M,u_i,\vec{u}_{i},x_0)\}.$$

\begin{lemma}
If $\M$ is efficient and not manipulable by the party $i$ then \\$C_i(\M,\vec{u}_{-i},x_0)$ is the frontier of a convex set, for all $(\vec{u}_{-i},x_0)\in\U^{(n-1)}\times\D$.
\end{lemma}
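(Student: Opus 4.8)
The plan is to turn the non-manipulability hypothesis into a fixed-point (self-consistency) description of $C_i$, and then read off the geometry from the fact that utilities in $\U'$ are forced to peak on $\partial\D$. First I would reformulate: fix $(\vec u_{-i},x_0)$ and write $C_i=C_i(\M,\vec u_{-i},x_0)$. If $x\in C_i$ then $x=\S(\M,u_i,\vec u_{-i},x_0)$ for some $u_i\in\U'$; applying the definition of non-manipulability with this same $u_i$ taken as party $i$'s \emph{true} utility gives $u_i(x)\geq u_i(\S(\M,\tilde u_i,\vec u_{-i},x_0))$ for every $\tilde u_i\in\U'$, i.e. $u_i(x)=\max_{y\in C_i}u_i(y)$. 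Hence every point of $C_i$ is the (unique, by the strict structure of $\U$) maximizer over $C_i$ of some $u_i\in\U'$, and conversely every such maximizer lies in $C_i$. This self-consistency, $C_i=\{\arg\max_{C_i}u_i : u_i\in\U'\}$, is the object I would exploit, with $K:=\mathrm{conv}(C_i)$ as the candidate convex set whose frontier is $C_i$.

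Next I would prove $C_i\subseteq\partial K$, which is where the restriction to $\U'$ is essential. The key elementary fact about $u\in\U'$ is that, its super-level sets being nested with a unique global maximum $\hat u\in\partial\D$, the map $t\mapsto u\big((1-t)x+t\hat u\big)$ is strictly increasing on $[0,1]$ for every $x\neq\hat u$: along any segment aimed at the boundary bliss point, utility strictly rises. Now suppose some $x\in C_i$ lies in $\mathrm{int}(K)$ and let $u\in\U'$ satisfy $x=\arg\max_{C_i}u$. Since $x$ is interior to $K$, the ray from $x$ toward $\hat u$ exits $K$ at a boundary point $z$ lying strictly between $x$ and $\hat u$, so $u(z)>u(x)$. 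As $z\in\partial K$ is a limit of points of $C_i$ (extreme points of a compact set lie in the set, and are approached by support points, see below), continuity of $u$ produces a point of $C_i$ with utility exceeding $u(x)$, contradicting maximality. Therefore no point of $C_i$ is interior to $K$, i.e. $C_i\subseteq\partial K$.

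For the reverse inclusion I would use that $\U'$ contains strictly concave perturbations $\langle c,\cdot\rangle-\varepsilon\lVert\cdot\rVert^2$ of every linear functional, whose maxima sit on $\partial\D$; non-manipulability applied to each forces $\arg\max_{C_i}\langle c,\cdot\rangle\in\overline{C_i}$, and these are exactly the support points of $K$. As $c$ ranges over all directions the support/exposed points are obtained, and by Straszewicz's theorem they are dense in the extreme points, whose closed convex hull is $K$; combined with $C_i\subseteq\partial K$ this yields $\overline{C_i}=\partial K$. The main obstacle is exactly this reverse step: first, upgrading ``$\overline{C_i}=\partial K$'' to ``$C_i=\partial K$'' requires showing $C_i$ is closed, which I would derive from continuity of $\S(\M,\cdot,x_0)$ together with compactness of $\U'$ under the pseudometric $\mu$ of Definition~\ref{Def:UPC}; second, reaching the non-exposed boundary points (relative interiors of flat faces of $K$) is not delivered by linear functionals alone, and here I would invoke Pareto efficiency of $\M$ — guaranteeing that for each boundary direction the outcome is a genuine, non-defaulting point of $\P$ on that face — to fill the entire frontier rather than just its exposed part. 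These two points (closedness of $C_i$ and full coverage of non-exposed boundary) are the delicate parts; the inclusion $C_i\subseteq\partial K$ is the clean consequence of boundary-peaked utilities.
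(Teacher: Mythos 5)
Your first step (non-manipulability $\Leftrightarrow$ every $x\in C_i$ maximizes its generating utility over $C_i$) is exactly the reformulation the paper relies on, and your argument that $C_i\subseteq\partial K$ with $K=\mathrm{conv}(C_i)$ is the analogue of the paper's ``empty interior'' step. But two of your steps do not go through as written. First, the ``key elementary fact'' is false for the class $\U$ of this paper: utilities are only required to have nested closed convex super-level sets with a unique global maximum, so $t\mapsto u\bigl((1-t)x+t\hat u\bigr)$ is merely non-decreasing and may be flat near $x$ (take $u(x)=\max(0,x_1-1)$ on $[0,2]$). At the exit point $z$ you therefore only get $u(z)\geq u(x)$, which does not contradict maximality of $x$ over $C_i$; nor is the maximizer over $C_i$ unique for such $u$, contrary to your parenthetical. (The paper's own empty-interior argument leans on the same unstated strictness, so this is a shared weakness, but it is not an ``elementary fact.'')

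The more serious gap is the reverse inclusion $\partial K\subseteq C_i$. Closedness of $C_i$ cannot be derived from ``compactness of $\U'$ under $\mu$'': $\U'$ is an infinite-dimensional function space carrying only a pseudometric, and no compactness is assumed or available anywhere in the paper. And the appeal to Pareto efficiency to fill in the non-exposed boundary points is not an argument: efficiency of $\M$ forces outcomes onto $\P(\vec u)$, but says nothing about every relative-interior point of a flat face of $\mathrm{conv}(C_i)$ being realized by some declaration $u_i\in\U'$; a priori $C_i$ could be only the exposed part of $\partial K$. Note that the paper never attempts this inclusion: its proof manufactures the convex set from the outside, by showing that at each $\bar x\in C_i$ the hyperplane $\langle x-\bar x,\nabla\bar u_i(\bar x)\rangle=0$ supports $C_i$ at $\bar x$ (this is what non-manipulability buys, since otherwise $C_i$ meets $\F^+(\bar u_i,\bar x)$), and then reading ``frontier of a convex set'' off the resulting family of supporting hyperplanes; the subsequent Theorem~\ref{Thm:2man} only ever uses membership of a few specific points in $C_i$ together with this support property, never surjectivity onto a whole boundary. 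So your detour through Straszewicz's theorem attacks a strictly harder statement than the lemma's use requires, and it is precisely there that your proof does not close.
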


\begin{proof}
Firstly we prove that $C_i(\M,\vec{u}_{-i},x_0)$ has an empty interior. By assuming the contrary, let us take an inner point $\bar x \in C_i^\circ(\M,\vec{u}_{-i},x_0)$ and let us consider its associated utility $\bar u_i$, then 
$$\{\bar x\} \subsetneq \F(\bar u_i,\bar x) \cap C_i(\M,\vec{u}_{-i},x_0),$$
implying that there exists another utility which improves $\bar u_i$'s performance contradicting the hypotheses.

Let now $\bar x$ be again in $C_i(\M,\vec{u}_{-i},x_0)$ and $\bar u_i$ be its associated utility, then $\M$ is not manipulable by $i$ if and only if $<x-\bar x,\nabla \bar u_i>$ is a supporting hyperplane for $C_i(\M,\vec{u}_{-i},x_0)$ at $\bar x$ in $\D$. The convexity follows by varying the couple $(\bar x,\bar u_i)$.
\end{proof}

\begin{theorem}\label{Thm:2man}
Any 2-party efficient ANM $\M$ is manipulable by at least a party within $\U'\times\U'$.
\end{theorem}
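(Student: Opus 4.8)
The plan is to argue by contradiction through the preceding Lemma: I would suppose that the efficient method $\M$ is manipulable by \emph{neither} of the two parties and deduce that the parties would then have to share one and the same ideal point, which cannot happen once their preferences genuinely conflict. Fix true utilities $u_1,u_2\in\U'$ and a starting point $x_0$, and write $C_1=C_1(\M,u_2,x_0)$ and $C_2=C_2(\M,u_1,x_0)$. Non-manipulability by both parties together with the Lemma hands me two convex bodies $K_1,K_2$ whose frontiers are $C_1,C_2$, with the extra structure that at a point $\bar x\in C_j$ produced by a report $\bar u_j$ the outward supporting normal of $K_j$ at $\bar x$ is $\nabla\bar u_j(\bar x)$. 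First I would seed these sets with three canonical outcomes: if party $1$ reports the opponent's utility $u_2$ the profile becomes $(u_2,u_2)$, whose only Pareto point is the common maximiser $\hat u_2$, so efficiency forces $\S(\M,u_2,u_2,x_0)=\hat u_2$ and hence $\hat u_2\in C_1$; symmetrically $\hat u_1\in C_2$; and the truthful outcome $p=\S(\M,u_1,u_2,x_0)$ lies in $C_1\cap C_2$ and, by efficiency, in $\P(u_1,u_2)$.

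The core of the argument is to feed Pareto efficiency into this supporting structure. At a point $\bar x\in C_1$ produced by a report $\bar u_1$, the outcome is Pareto efficient for $(\bar u_1,u_2)$, so the separating–hyperplane characterisation of $\P$ makes $\nabla\bar u_1(\bar x)$ and $\nabla u_2(\bar x)$ anti-parallel whenever $\bar x$ is not an ideal point. Thus the outward normal $\nabla\bar u_1(\bar x)$ of $K_1$ points along $-\nabla u_2(\bar x)$ at every such $\bar x$, which says that $u_2$ has no tangential variation along the connected frontier $C_1$: $u_2$ is constant on $C_1$ and $K_1$ is a super-level set of $u_2$. Since $\hat u_2\in C_1$, that level is the global maximum of $u_2$, whose super-level set is the single point $\{\hat u_2\}$; hence $C_1=\{\hat u_2\}$. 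The mirror-image argument gives $C_2=\{\hat u_1\}$, and then $p\in C_1\cap C_2$ yields $p=\hat u_2=\hat u_1$, which contradicts the assumption that the two conflicting parties in $\U'$ have distinct maximisers on $\partial\D$. This delivers the theorem.

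The step I expect to be the real obstacle is the degeneration at ideal points, where $\nabla\bar u_1$ or $\nabla u_2$ vanishes and the anti-parallel relation breaks down; this is precisely the loophole behind a dictatorial method that always returns one party's reported optimum, for which $C_1=\{\hat u_2\}$ holds trivially while $C_2$ spreads over $\partial\D$, so the level-set collapse fails on one side. To close it I would lean on the restriction to $\U'$ (ideals genuinely on $\partial\D$, and $\hat u_1\neq\hat u_2$ for conflicting parties) together with the continuity of $\S$ in the utility argument, which prevents the normal field of $C_j$ from degenerating to zero along a non-trivial arc and so validates the level-set conclusion; equivalently, the distinctness $\hat u_1\neq\hat u_2$ is exactly the hypothesis that excludes the dictatorial degeneracy. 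Making that exclusion precise—rather than the bookkeeping of the two symmetric halves—is where the care is needed.
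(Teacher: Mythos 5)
Your skeleton matches the paper's: argue by contradiction, invoke the preceding Lemma to turn $C_1$ and $C_2$ into frontiers of convex sets whose supporting normal at an outcome is the gradient of the report producing it, and seed them with $\hat{u}_2\in C_1$, $\hat{u}_1\in C_2$ and $\S(\M,\vec{u},x_0)\in C_1\cap C_2$. The finishing move is where you diverge. The paper stays local: it takes the single hyperplane $H(x)=\langle x-\S(\M,\vec{u},x_0),\nabla u_2\rangle$ at the \emph{truthful} settlement, uses non-manipulability of party 2 to make $H$ support $C_2$ there, uses Pareto efficiency of the truthful outcome (anti-parallel gradients of the true $u_1$ and $u_2$ at that one point) to make the same hyperplane support $C_1$, and gets the contradiction from the positions of $x_0$ and $\hat{u}_2$ relative to $H$ --- note the paper also uses $x_0\in C_1\cap C_2$, which you drop. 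You instead integrate the anti-parallel condition along all of $C_1$ to make $u_2$ constant there, collapse $C_1$ to $\{\hat{u}_2\}$ and symmetrically $C_2$ to $\{\hat{u}_1\}$, and contradict $\hat{u}_1\neq\hat{u}_2$.

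The gap is exactly the one you flag, and your proposed patch does not close it. The anti-parallel relation at $\bar{x}\in C_1$ requires $\nabla\bar{u}_1(\bar{x})\neq 0$, and it fails wherever the method hands a party precisely the ideal point of its \emph{reported} utility; this can happen along the entire frontier, not merely at isolated points. Concretely, the method whose outcome is always party 2's declared optimum is continuous, Pareto efficient, and operates on profiles in $\U'$ with distinct ideals on $\partial\D$, yet $C_2(\M,u_1,x_0)$ sweeps over a whole arc of $\partial\D$ on which the normal field is identically degenerate, so $u_1$ is not constant on $C_2$ and the level-set collapse fails on that side. Hence neither the restriction to $\U'$ nor continuity of $\S$ in the utility argument ``prevents the normal field from degenerating along a non-trivial arc''; the distinctness of the true ideals is not the hypothesis that excludes this degeneracy, because the degeneracy is driven by the \emph{reported} utilities ranging over all of $\U'$. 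Any repair has to inject something you have not used --- the paper's version leans on $x_0\in C_1\cap C_2$ and on evaluating gradients only of the true utilities at the single truthful settlement, where non-degeneracy can be argued from genuine conflict. As written, your global constancy step is the load-bearing claim and it is not established.
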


\begin{proof}
We show that if $\M$ is not manipulable by party 2 then it is manipulable by party 1. Hence, for all $\vec{u}\in\U^2$ and $x_0\not\in\P(\vec{u})$ preferred by both parties to the opponent's best preference in $\D$, we have that $C_2(\M,u_1,x_0)$ is the frontier of a convex set and clearly $x_0\in C_2(\M,u_1,x_0)$.
Let us now assume that $\M$ is not manipulable also by party 1, thus let us consider $C_1(\M,u_2,x_0)$ which holds the same properties of $x_0\in C_2(\M,u_1,x_0)$. Moreover
$$\{x_0, \S(\M,\vec{u},x_0)\}\subset \left( C_1(\M,u_2,x_0) \cap C_2(\M,u_1,x_0)\right),$$ 
$$\hat{u}_2 = arg\max_{x\in\D}u_2(x)\in C_1(\M,u_2,x_0)$$ 
and
$$\hat{u}_1 = \arg\max_{x\in\D}u_1(x)\in C_2(\M,u_1,x_0).$$

Since party two cannot manipulate $\M$, $H(x) = <x-\S(\M,\vec{u},x_0),\nabla \bar u_2>$ is a supporting hyperplane of $C_2(\M,u_1,x_0)$ at $\S(\M,\vec{u},x_0)$ and, in order to be $\S(\M,\vec{u},x_0)$ Pareto efficient, $H$ should be supporting also to $C_1(\M,u_2,x_0)$ but this is impossible, since the two convex frontier meets in $x_0$ and $H(\hat{u}_2) < 0$.
\end{proof}

\begin{corollary}
If an ANM $\M$ is Pareto efficient and symmetric then it is manipulable by each party.
\end{corollary}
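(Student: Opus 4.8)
The plan is to combine Theorem \ref{Thm:2man} with the symmetry hypothesis: the theorem already guarantees that at least one party can manipulate, and symmetry will let me transfer that ability to the remaining party. I work in the two-party setting, which is the setting of Theorem \ref{Thm:2man}; the general multi-party statement then follows by regarding a colluding block as a single aggregated party, as anticipated in the introduction. First I would invoke Theorem \ref{Thm:2man} to obtain that $\M$ is manipulable by at least one of the two parties, say party $1$ (the other case being identical). Thus for every profile $\vec{u}=(u_1,u_2)$ and every $x_0$ there is a declaration $\tilde u_1\in\U'$ with
$$u_1(\S(\M,\tilde u_1,u_2,x_0)) > u_1(\S(\M,u_1,u_2,x_0)).$$

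Next I would show that party $2$ inherits the same power using symmetry. Fix an arbitrary profile $(u_1,u_2)$ and starting point $x_0$, and apply the manipulability of party $1$ \emph{to the transposed profile} $(u_2,u_1)$: since manipulability of party $1$ is quantified over \emph{all} profiles, it applies in particular with $u_2$ in the first slot, yielding a $\tilde u\in\U'$ with
$$u_2(\S(\M,\tilde u,u_1,x_0)) > u_2(\S(\M,u_2,u_1,x_0)).$$
Now the symmetry of $\M$ under the transposition $\sigma$ gives the two identities $\S(\M,\tilde u,u_1,x_0)=\S(\M,u_1,\tilde u,x_0)$ and $\S(\M,u_2,u_1,x_0)=\S(\M,u_1,u_2,x_0)$, so setting $\tilde u_2:=\tilde u$ the inequality rewrites as
$$u_2(\S(\M,u_1,\tilde u_2,x_0)) > u_2(\S(\M,u_1,u_2,x_0)),$$
which is precisely the condition that party $2$ manipulates at $(u_1,u_2,x_0)$. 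As the profile was arbitrary, party $2$ is manipulable as well, and hence both parties can manipulate.

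The step I expect to require the most care is the bookkeeping of the symmetry substitution: I must ensure the permutation identity $\S(\M,\vec{u},x)=\S(\M,\vec{u}_\sigma,x)$ is applied to the \emph{matched} pair of outcomes — the manipulated settlement and the honest one — so that both sides of the inequality are re-expressed in terms of the original ordering $(u_1,u_2)$; applying it to only one side would break the comparison. A secondary subtlety is purely logical: I must read ``manipulable by party $1$'' as a statement universal over profiles, which is exactly what licenses feeding it the transposed input $(u_2,u_1)$, while the Proposition preceding Theorem \ref{Thm:2man} guarantees that the produced declaration $\tilde u$ genuinely lies in the admissible class $\U'$, so no loss of admissibility occurs in the transfer.
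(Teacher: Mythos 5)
Your argument is correct and is essentially the intended one: the paper states this corollary without proof, and the natural justification is exactly your symmetry transfer, namely that Theorem \ref{Thm:2man} yields a manipulating party and applying that party's manipulation to the transposed profile $(u_2,u_1)$, then rewriting both the manipulated and the honest settlement via $\S(\M,\vec{u},x)=\S(\M,\vec{u}_\sigma,x)$, hands the same power to the other party. Your bookkeeping of the two symmetry substitutions is the only delicate point and you handle it correctly, so there is nothing to add.
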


\begin{example}\label{Ex2}
The ANM $\M_{d}(\vec{u},x_0) = u_i\cdot I(\F(\vec{u},x_0))$ is not symmetric and it manipulable by all parties but the $i$-th, since they can reshape $\F(\vec{u},x_0)$. Furthermore $\M_{d}$ is the equivalent, with an endogenous {\it status quo}, of the dictatorial method which is the only efficient and not manipulable method whenever the {\it status quo} is exogenous. Hence, differently than social choice theory, efficiency implies manipulability with no exceptions, in negotiation with a endogenous {\it status quo}.
\end{example}

The following definition introduces a generalization of the concept of manipulation, which naturally arises in multi party negotiation when some of them form a coalition by sharing the same intent to overcome the others.

\begin{definition}
A $j$-party \emph{collusion} $\C$ is a subset of $\{1,\ldots,n\}$ of cardinality $j$ such that $u_{i_1} = u_{i_2}$ for all $i_1,\,i_2\in \C$.
\end{definition}

\begin{theorem}
If an $n$-party efficient ANM $\M$ is not manipulable by the $i$-th party then $\M$ is manipulable by a collusion of the other $n-1$ parties.
\end{theorem}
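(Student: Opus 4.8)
The plan is to reduce the $n$-party statement to the $2$-party Theorem \ref{Thm:2man} by collapsing the $n-1$ non-$i$ parties into a single ``voice''. First I would define a $2$-party ANM $\M'$ induced by $\M$ on collusion profiles: for $(v_1,v_2)\in\U^2$ set $\M'(v_1,v_2,x_0)=\M(\vec u,x_0)$, where $\vec u$ is the $n$-profile with $u_i=v_1$ and $u_j=v_2$ for every $j\neq i$. Since $\M$ is a continuous ANM and the inducing diagonal substitution $(v_1,v_2)\mapsto(v_1,v_2,\ldots,v_2)$ is continuous in the relevant pseudometric, $\M'$ is again a continuous $2$-party ANM, with outcome map $\S(\M',(v_1,v_2),x_0)=\S(\M,\vec u,x_0)$.

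The key observation that makes the reduction work is that efficiency is preserved. For a collusion profile $\vec u$ as above, the jointly improving region collapses by idempotency of the intersection: $\F(\vec u,x)=\F(v_1,x)\cap\bigcap_{j\neq i}\F(v_2,x)=\F(v_1,x)\cap\F(v_2,x)$. Hence the $n$-party Pareto set restricted to these profiles coincides with the $2$-party Pareto set of $(v_1,v_2)$, that is $\P(\vec u)=\P(v_1,v_2)$. Because $\M$ is efficient we then have $\S(\M',(v_1,v_2),x_0)=\S(\M,\vec u,x_0)\in\P(\vec u)=\P(v_1,v_2)$, so $\M'$ is a $2$-party efficient ANM.

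Next I would transfer the non-manipulability hypothesis. Every manipulation of $\M'$ by its first party is a manipulation of $\M$ by party $i$ in which the remaining parties happen to share the utility $v_2$; therefore the assumption that $\M$ is not manipulable by party $i$ forces $\M'$ to be not manipulable by party $1$. Applying Theorem \ref{Thm:2man} to $\M'$ (within $\U'\times\U'$), which is manipulable by at least one of its two parties, I conclude that $\M'$ must be manipulable by its second party: there exist $(v_1,v_2)$, $x_0$ and a declaration $\tilde v_2\in\U'$ with
$$v_2\bigl(\S(\M',(v_1,\tilde v_2),x_0)\bigr) > v_2\bigl(\S(\M',(v_1,v_2),x_0)\bigr).$$

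Finally I would read this back in the original $n$-party method. Taking the collusion $\C=\{1,\ldots,n\}\setminus\{i\}$ with common true utility $v_2$, the above inequality says precisely that, by jointly switching every declaration from $v_2$ to $\tilde v_2$, the coalition strictly improves its common payoff against party $i$; that is, $\M$ is manipulable by the collusion of the other $n-1$ parties. The main obstacle I anticipate is the efficiency-preservation step: one must ensure that replacing $n-1$ identical constraints by a single one does not enlarge the Pareto frontier, which is exactly where the idempotency $\bigcap_{j\neq i}\F(v_2,x)=\F(v_2,x)$ is used. The continuity inheritance of $\M'$ and the clean embedding of manipulations into those of $\M$ are comparatively routine.
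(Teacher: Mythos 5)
Your proposal is correct and follows essentially the same route as the paper: the paper's proof is the one-line observation that when all parties but the $i$-th collude, $\M$ reduces to a $2$-party efficient ANM, and then Theorem \ref{Thm:2man} forces manipulability by the colluding ``party'' since party $i$ cannot manipulate. You have simply filled in the details the paper leaves implicit (the induced method $\M'$, preservation of efficiency via idempotency of the intersection, and the back-translation of a party-$2$ manipulation into a collusive one), all of which are consistent with the paper's intent.
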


\begin{proof}
Whenever all the parties but the $i$-th one collude, then $\M$ reduces to a 2-party efficient negotiation ANM and the proof comes easily by applying the Theorem \ref{Thm:2man}.
\end{proof}

The same arguments apply to the following Theorems.

\begin{theorem}
If an $n$-party efficient ANM $\M$ is not manipulable by the a collusion of all the parties but the $i$-th one, then $\M$ is manipulable by the $i$-th party.
\end{theorem}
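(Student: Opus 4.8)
The plan is to mirror exactly the reduction used in the preceding theorem, but to read Theorem \ref{Thm:2man} in the opposite direction. First I would let $\C = \{1,\ldots,n\}\setminus\{i\}$ denote the collusion of all parties but the $i$-th one. By the definition of collusion, every member of $\C$ shares one common utility function, call it $u_\C$, so any profile in which this collusion is active is determined by the single pair $(u_i,u_\C)$.

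Next I would observe that, restricted to such profiles, $\M$ acts exactly as a $2$-party ANM $\M'$ whose two effective players are party $i$ and the block $\C$. Indeed $\F(\vec{u},p)$ collapses to $\F(u_i,p)\cap\F(u_\C,p)$, so the jointly improving region, the Pareto set $\P(\vec{u})$, and hence the outcome $\S(\M,\vec{u},x_0)$ depend only on the two distinct utilities. Consequently the efficiency of $\M$ transfers to the efficiency of $\M'$.

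Then I would apply Theorem \ref{Thm:2man} to the $2$-party efficient ANM $\M'$: it is manipulable by at least one of its two players within $\U'\times\U'$. By hypothesis the collusion $\C$ cannot manipulate $\M$, hence cannot manipulate $\M'$; so the manipulating player must be party $i$. A manipulation of $\M'$ by party $i$ is literally a unilateral change of $u_i$ improving $i$'s own outcome, which is precisely a manipulation of the original $\M$. Finally, the earlier proposition on shrinking the utility class lifts this conclusion from $\U'$ back to $\U$.

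The delicate point, which I expect to be the main obstacle, is justifying that the collapsed method $\M'$ is genuinely an efficient $2$-party ANM and that the notions of manipulation/non-manipulation in the reduced and the original pictures coincide; in particular that Pareto efficiency of the outcome in the full $n$-party sense agrees with Pareto efficiency for the two distinct utilities $u_i$ and $u_\C$. I would settle this through the supporting-hyperplane characterization of $\P(\vec{u})$ established above, noting that identifying the colluding coordinates into a single utility changes neither the convex feasible regions being separated nor the separating hyperplanes that certify efficiency, so that the hypothesis ``$\C$ does not manipulate'' is exactly the non-manipulability of the second player of $\M'$ required by Theorem \ref{Thm:2man}.
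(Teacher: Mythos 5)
Your proof follows exactly the paper's route: the paper handles this theorem by the remark ``the same arguments apply,'' referring to the preceding proof that collapses the collusion into a single effective party, reduces $\M$ to a $2$-party efficient ANM, and invokes Theorem \ref{Thm:2man} so that non-manipulability of the collusion forces manipulability by party $i$. Your version merely spells out the verification that the reduction preserves efficiency and the notion of manipulation, which the paper leaves implicit, so it is correct and essentially identical in approach.
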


\begin{theorem}
If an $n$-party efficient ANM $\M$ is not manipulable by $j$-party collusion, then it is manipulable by the collusion of the other $n-j$.
\end{theorem}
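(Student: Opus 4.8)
The plan is to mimic the reduction used in the two preceding theorems: a collusion whose members all declare the same utility behaves exactly like a single party, so that partitioning the $n$ parties into the $j$-collusion and its complement collapses $\M$ into a $2$-party efficient ANM to which Theorem \ref{Thm:2man} applies.

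First I would record the elementary fact that if a set of parties $\C$ all share the utility $u$, then $\F(\vec{u}_\C,x)=\F(u,x)$, since the intersection of identical improving regions is that region itself. Consequently, fixing the $j$-party collusion $\C$ with common declared utility $u_A$ and letting the remaining $n-j$ parties form the complementary collusion $\C'$ with common declared utility $u_B$, the feasible geometry seen by $\M$ depends only on the pair $(u_A,u_B)$. This lets me define the induced $2$-party method $\M'(u_A,u_B,x_0)=\M(\vec{u},x_0)$, where $\vec{u}$ repeats $u_A$ on $\C$ and $u_B$ on $\C'$, and to identify its outcome $\S(\M',(u_A,u_B),x_0)=\S(\M,\vec{u},x_0)$.

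Next I would check that $\M'$ is again efficient. Since $\P(\vec{u})=\{x\mid\F(u_A,x)\cap\F(u_B,x)=\{x\}\}$ whenever $\vec{u}$ takes only the two values $u_A,u_B$, Pareto efficiency of the $n$-party profile with repeated utilities is exactly Pareto efficiency of the $2$-party profile $(u_A,u_B)$; thus $\S(\M',(u_A,u_B),x_0)\in\P((u_A,u_B))$ for all inputs and $\M'$ is an efficient $2$-party ANM. Theorem \ref{Thm:2man} then guarantees that $\M'$ is manipulable by at least one of its two super-parties within $\U'\times\U'$.

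Finally I would translate this back. A deviation of super-party $A$ in $\M'$ is precisely a common change of the shared declaration of the $j$ members of $\C$, and likewise for $B$ and $\C'$; because the colluding members share a single utility, improving the super-party's payoff is the same as improving each colluding member's payoff. Hence, if $\M$ is not manipulable by the $j$-party collusion $\C$ --- i.e.\ super-party $A$ cannot profitably deviate in $\M'$ --- then Theorem \ref{Thm:2man} forces super-party $B$ to admit a profitable deviation, which is exactly a manipulation of $\M$ by the complementary $(n-j)$-party collusion $\C'$. The main obstacle I expect is bookkeeping rather than geometry: making the reduction rigorous requires verifying that the efficiency and non-manipulability notions transfer cleanly between the $n$-party formulation with repeated utilities and the genuine $2$-party formulation, and that the ``for every profile'' quantifier in the definition of manipulability is respected under this identification.
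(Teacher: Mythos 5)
Your proposal is correct and follows essentially the same route as the paper: the paper proves the $n-1$ versus $1$ case by observing that a collusion collapses $\M$ to a $2$-party efficient ANM to which Theorem \ref{Thm:2man} applies, and then simply remarks that ``the same arguments apply'' to this statement. You have merely filled in the bookkeeping (identical utilities give identical improving regions, efficiency and manipulability transfer under the identification) that the paper leaves implicit.
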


\subsection{Efficient Manipulability}\label{Sec4.1}
At this point to ask whether a possible manipulation does affect the real efficiency of a method is a legitimate question.

Let us indicate with $C(\M)\subset\{1,\ldots,n\}$ the set of the parties which can manipulate the method $\M$.

\begin{definition}
An efficient ANM $\M$ is \emph{efficiently manipulable} if and only if 
\begin{equation}
\exists u^*_i \in \left\{\arg\sup\limits_{\tilde{u}_i\in\U} u_i(\S(\M,\tilde{u}_i,\vec{u}_{-i},x_0))\right\}
\end{equation}
such that $\P(\tilde{u}_i,\vec{u}_{-i}) = \P(\vec{u})$, for all $(\vec{u},x_0)\in\U^n\times\D$ and all $i\in C(\M)$.
\end{definition}

The following is a sufficient condition to ensure efficient manipulability which is suitable to capture the bargaining solutions described in Section \ref{Sec3}.

\begin{proposition}\label{Prop:EMeps}
If $\M$ is efficient and for every fixed $(\vec{u},x_0)\in\U^n\times\D$, $i\in C(\M)$ and $\epsilon > 0$ an $u_i^\epsilon\in\U$ exists such that $\P(u_i^\epsilon, \vec{u}_{-i}) = \P{\vec{u}}$ and 
$$|\S(\M,u_i^\epsilon,\vec{u}_{-i},x_0) - {\arg\max}_{x\in\F(\vec{u},x_0)} u_i(x) | < \epsilon$$
then $\M$ is efficiently manipulable.
\end{proposition}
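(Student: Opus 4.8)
The plan is to pin down the exact payoff party $i$ can extract by misreporting, show the hypothesis realizes that payoff through frontier-preserving declarations, and then promote the $\epsilon$-approximants to a genuine optimal declaration. Throughout I fix $(\vec{u},x_0)\in\U^n\times\D$ and $i\in C(\M)$, and set $x^\ast=\arg\max_{x\in\F(\vec{u},x_0)}u_i(x)$. Since $\F(\vec{u},x_0)$ is a nonempty compact convex set and the utilities in $\U$ are strictly quasi-concave (unique maxima, nested convex super-level sets), $x^\ast$ is well defined and unique. A short argument gives $x^\ast\in\P(\vec{u})$: any $y\in\F(\vec{u},x^\ast)$ also lies in $\F(\vec{u},x_0)$ and satisfies $u_i(y)\geq u_i(x^\ast)$, so by uniqueness of the maximizer $y=x^\ast$, whence $\F(\vec{u},x^\ast)=\{x^\ast\}$.

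First I would establish the upper bound $\sup_{\tilde{u}_i\in\U}u_i(\S(\M,\tilde{u}_i,\vec{u}_{-i},x_0))\leq u_i(x^\ast)$. Because an efficient ANM is improving (shown earlier in the excerpt), for any declaration $\tilde{u}_i$ the outcome $y=\S(\M,\tilde{u}_i,\vec{u}_{-i},x_0)$ satisfies $u_j(y)\geq u_j(x_0)$ for every $j\neq i$, as the remaining parties report truthfully. If $u_i(y)>u_i(x_0)$ then $y\in\F(\vec{u},x_0)$, hence $u_i(y)\leq u_i(x^\ast)$; otherwise $u_i(y)\leq u_i(x_0)\leq u_i(x^\ast)$ since $x_0\in\F(\vec{u},x_0)$. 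Either way party $i$ cannot surpass $u_i(x^\ast)$.

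Next the hypothesis supplies, for every $\epsilon>0$, a declaration $u_i^\epsilon$ with $\P(u_i^\epsilon,\vec{u}_{-i})=\P(\vec{u})$ whose outcome lies within $\epsilon$ of $x^\ast$. Continuity of $u_i$ then forces $u_i(\S(\M,u_i^\epsilon,\vec{u}_{-i},x_0))\to u_i(x^\ast)$, so the supremum equals $u_i(x^\ast)$ and is approached along frontier-preserving declarations.

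The final and hardest step is \emph{attainment}: the definition of efficient manipulability demands an actual element $u_i^\ast$ of the $\arg\sup$ that still satisfies $\P(u_i^\ast,\vec{u}_{-i})=\P(\vec{u})$, whereas the hypothesis only furnishes approximants. My plan is to pass to a limit of the sequence $(u_i^{1/k})_k$ inside the metric identification of $(\U,\mu)$: by continuity of $\M$ in its utility argument the outcomes $y_k\to x^\ast$, and $x^\ast\in\P(\vec{u})$ already lies on the common frontier, so if the set $\{\tilde{u}_i:\P(\tilde{u}_i,\vec{u}_{-i})=\P(\vec{u})\}$ is sequentially compact and the frontier condition is closed under $\mu$-convergence, the limit declaration $u_i^\ast$ belongs to it and realizes $u_i(\S(\M,u_i^\ast,\vec{u}_{-i},x_0))=u_i(x^\ast)$, exhibiting $u_i^\ast\in\arg\sup$ with the frontier preserved. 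The obstacle is exactly verifying this compactness and closedness in the pseudometric; the alternative route is to build a single $u_i^\ast$ whose supporting hyperplane at $x^\ast$ coincides with that of $u_i$ (available from the supporting-hyperplane characterization of $\P(\vec{u})$), which pins the efficient outcome at $x^\ast$ while leaving $\P(\vec{u})$ unchanged.
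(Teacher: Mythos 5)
Your proposal follows essentially the same route as the paper: identify $x^\ast=\arg\max_{x\in\F(\vec{u},x_0)}u_i(x)$ as the best payoff party $i$ can possibly extract by misreporting, observe that the hypothesis lets the manipulator approach it along frontier-preserving declarations $u_i^\epsilon$, and conclude by uniqueness of the maximizer. The attainment step you flag as the remaining obstacle is precisely the point the paper elides --- its one-line proof simply asserts that the party ``can gain'' $\lim_{\epsilon\rightarrow 0}\S(\M,u_i^\epsilon,\vec{u}_{-i},x_0)=x^\ast$ without verifying that the $\arg\sup$ in the definition of efficient manipulability is actually attained by a frontier-preserving utility --- so your explicit case analysis for the upper bound and your identification of this compactness/closedness issue are, if anything, more careful than the paper's own argument.
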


\begin{proof}
If the hypotheses are true, then the $i$-th party, in looking for an manipulating utility, can gain $$\lim\limits_{\epsilon\rightarrow 0} \S(\M,u_i^\epsilon,\vec{u}_{-i},x_0) = \arg\max_{x\in\F(\vec{u},x_0)} u_i(x),$$
without moving the Pareto efficient frontier. The result follows since the right-hand side of the previous equality is the best choice within $\F(\vec{u},x_0)$ for party $i$ and the uniqueness of the $\max$. 
\end{proof}

\begin{corollary}\label{Cor:EgEM}
The egalitarian bargaining method $\M_{\texttt{e}}$ is efficiently manipulable.
\end{corollary}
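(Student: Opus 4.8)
The plan is to verify the hypotheses of Proposition \ref{Prop:EMeps}, which then delivers the conclusion immediately. Three things must be checked: that $\M_\texttt{e}$ is efficient, that we know which parties lie in $C(\M_\texttt{e})$, and that for each such party and each $\epsilon>0$ we can exhibit a preference-preserving declaration driving the egalitarian outcome to within $\epsilon$ of that party's most preferred feasible point. Efficiency of the max--min solution is standard for the strictly concave profiles in $\U$: at the optimum the two gains are equalized on the Pareto frontier, so the outcome lies in $\P(\vec{u})$. Since $\M_\texttt{e}$ is manifestly symmetric, the corollary to Theorem \ref{Thm:2man} gives $C(\M_\texttt{e})=\{1,2\}$, so it suffices to treat party $1$; party $2$ is identical after swapping indices.

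For the required family I would simply rescale the manipulating party's utility: set $u_1^\epsilon = s(\epsilon)\,u_1$ for a small positive factor $s(\epsilon)\to 0$ to be fixed below. Multiplication by a positive constant leaves every super-level set of $u_1$ unchanged, so $u_1^\epsilon\in\U$ and, crucially, $\F(u_1^\epsilon,x)=\F(u_1,x)$ for every $x$; hence the preference orderings---and therefore the whole efficient set---are untouched, giving $\P(u_1^\epsilon,u_2)=\P(\vec{u})$. This is the cheap half of the argument: the declaration is a pure cardinal rescaling that is invisible to any ordinal notion of efficiency, yet the $\min$ operator defining $\M_\texttt{e}$ is genuinely sensitive to it.

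The substance is the convergence claim, and this is where the main work lies. Writing $g_1=u_1-u_1(x_0)$ and $g_2=u_2-u_2(x_0)$, the declared outcome solves $\max_{x\in\D}\min(s\,g_1(x),g_2(x))$; since the value is nonnegative (it is already attained at $x_0$) the maximizer sits in $\F(\vec{u},x_0)$ and, by efficiency, on the portion of $\P(\vec{u})$ joining the point where $g_1=0$ to the point $x^\ast=\arg\max_{x\in\F(\vec{u},x_0)}u_1(x)$, where $g_2=0$. Along this arc $g_1$ increases and $g_2$ decreases, so $\min(s\,g_1,g_2)$ rises with $s\,g_1$ until the two curves cross and then falls with $g_2$; the maximizer is therefore the crossover point determined by $s\,g_1=g_2$. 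As $s\to 0$ the left-hand side tends to $0$, forcing $g_2\to 0$ at the crossover, i.e. the egalitarian outcome migrates to $x^\ast$. The expected obstacle is precisely making this crossover/limit rigorous---ruling out interior maximizers off the frontier, handling the degenerate case in which $\hat u_1\in\F(\vec{u},x_0)$ so that there is no conflict, and converting the limit into the quantitative $\epsilon$-bound by choosing $s(\epsilon)$ small enough via continuity of $\S(\M_\texttt{e},\cdot,x_0)$. Once this is in place, Proposition \ref{Prop:EMeps} applies and $\M_\texttt{e}$ is efficiently manipulable.
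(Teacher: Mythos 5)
Your proposal is correct and follows essentially the same route as the paper: both manipulate by a positive rescaling $u_1^\epsilon = a\,u_1$ (which preserves all level sets, hence the Pareto frontier) and exploit that the max--min crossover point $a\,g_1 = g_2$ migrates to $\arg\max_{x\in\F(\vec{u},x_0)}u_1(x)$ as $a\to 0$. The only difference is presentational: the paper gives the explicit quantitative bound $a < u_2(x^\epsilon)/(u_1(x_1^*)-u_1(x_0))$ for a frontier point $x^\epsilon$ within $\epsilon$ of the target, whereas you phrase it as a limit $s(\epsilon)\to 0$ to be fixed by continuity.
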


\begin{proof}
Let $x^\epsilon\in\P(\vec{u})$ be a point which is far less than $\epsilon$ form $$x_1^* = \arg\max_{x\in\F(\vec{u},x_0)} u_1(x)$$ and let $a \in (0, u_2(x^\epsilon)/(u_1(x_1^*)-u_1(x_0)))$ then $u_1^\epsilon = a\cdot u_1$ satisfies the hypothesis of Proposition \ref{Prop:EMeps}. 
\end{proof}

\begin{corollary}
The Nash's bargaining solution $\M_{\texttt{n}}$ is efficiently manipulable.
\end{corollary}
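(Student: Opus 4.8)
The plan is to verify the sufficient condition of Proposition~\ref{Prop:EMeps}, exactly as in Corollary~\ref{Cor:EgEM}, but with a crucial change of strategy. For the egalitarian method the simple rescaling $u_1^\epsilon = a\cdot u_1$ sufficed, because $\M_{\texttt{e}}$ is sensitive to the cardinal scale of the utilities. The Nash product $\M_{\texttt{n}}$, however, is invariant under positive affine reparametrisations of each party: replacing $u_1$ by $a\,u_1+b$ merely multiplies $(u_1(x)-u_1(x_0))$ by $a$ and hence does not move $\arg\max_{x\in\D}\M_{\texttt{n}}(\tilde u_1,u_2,x_0)(x)$. Thus the manipulation witnessing efficient manipulability must be genuinely \emph{nonlinear} while still order-preserving, so that it leaves the Pareto frontier untouched yet reshapes the Nash product. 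Since $\M_{\texttt{n}}$ is symmetric and efficient it is manipulable by each party, so it suffices to treat party $1$.

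Accordingly, I would fix $(\vec{u},x_0)$ with $x_0\notin\P(\vec{u})$, set $x_1^\ast=\arg\max_{x\in\F(\vec{u},x_0)}u_1(x)$, and seek the manipulation among the monotone reparametrisations $u_1^{(k)}=\phi_k\circ u_1$ with $\phi_k$ strictly increasing and strongly convex, e.g. $\phi_k(t)=e^{kt}$ (or $\phi_k(t)=(t+c)^k$ after shifting $u_1$ by a constant $c$ so that $u_1+c>0$ on the compact set $\D$). Because $\phi_k$ is a strictly increasing bijection, the super-level sets are preserved, $\F(u_1^{(k)},p)=\F(u_1,p)$ for every $p\in\D$, so $u_1^{(k)}\in\U$ (its super-level sets are the same closed convex decreasing family with the same unique maximiser) and, crucially, $\P(u_1^{(k)},u_2)=\P(\vec{u})$. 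Hence the only effect of the reparametrisation is on the \emph{value} of the Nash product
\[
N_k(x)=\bigl(\phi_k(u_1(x))-\phi_k(u_1(x_0))\bigr)\cdot\bigl(u_2(x)-u_2(x_0)\bigr),
\]
whose maximiser is $\S(\M_{\texttt{n}},u_1^{(k)},u_2,x_0)$.

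The heart of the argument is then a concentration estimate showing $\S(\M_{\texttt{n}},u_1^{(k)},u_2,x_0)\rightarrow x_1^\ast$ as $k\rightarrow\infty$. Writing $V^\ast=u_1(x_1^\ast)=\max_{x\in\F(\vec{u},x_0)}u_1$, the factor $\phi_k(u_1(x))$ grows fastest precisely where $u_1$ is largest, so for large $k$ the product $N_k$ is dominated by points whose $u_1$-value is within any fixed $\delta$ of $V^\ast$. Concretely, for any maximiser $x_k$ I would compare $N_k(x_k)$ with $N_k(\tilde x)$ at a fixed test point $\tilde x$ near $x_1^\ast$ with $u_1(\tilde x)$ slightly below $V^\ast$ and $u_2(\tilde x)>u_2(x_0)$: since $\phi_k(u_1(\tilde x))/\phi_k(u_1(x_k))\rightarrow\infty$ whenever $u_1(x_k)$ stays bounded away from $V^\ast$ while the $u_2$-gains remain bounded, optimality forces $u_1(x_k)\rightarrow V^\ast$, and then continuity together with the uniqueness of $x_1^\ast$ (used also in Proposition~\ref{Prop:EMeps}) gives $x_k\rightarrow x_1^\ast$. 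Choosing $k$ large enough that $|x_k-x_1^\ast|<\epsilon$ and putting $u_1^\epsilon=u_1^{(k)}$ verifies the hypotheses of Proposition~\ref{Prop:EMeps}, whence $\M_{\texttt{n}}$ is efficiently manipulable.

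The step I expect to be delicate is the boundary case in which the constraint $u_2\geq u_2(x_0)$ is active at $x_1^\ast$, i.e. $u_2(x_1^\ast)=u_2(x_0)$: there the second factor of $N_k$ vanishes at the target point, so $x_1^\ast$ is never attained exactly and the convergence is genuinely asymptotic. The domination comparison above is designed precisely to cover this case, since it only requires a nearby test point with strictly positive $u_2$-gain and $u_1$-value arbitrarily close to $V^\ast$, which exists by continuity because $x_0\notin\P(\vec{u})$ forces $\F^+(\vec{u},x_0)$ to have nonempty interior; but this is where the argument must be made carefully rather than by a one-line limit, and it is also the reason the statement is reached through the $\epsilon$-approximation of Proposition~\ref{Prop:EMeps} rather than by exhibiting a single exactly optimal manipulation.
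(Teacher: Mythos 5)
Your proposal is correct and follows essentially the same route as the paper: the paper's witness is $u_1^\epsilon(x) = (u_1(x)-u_1(x_0))^{\alpha_\epsilon}$ with $\alpha_\epsilon>1$, which is precisely your shifted power reparametrisation $\phi_k(t)=(t+c)^k$ with $c=-u_1(x_0)$, chosen so that the level sets (hence $\P(\vec{u})$) are preserved while the Nash product concentrates near $\arg\max_{x\in\F(\vec{u},x_0)}u_1(x)$ as the exponent grows. You supply considerably more justification than the paper's one-line proof (the affine-invariance obstruction, the domination estimate, the boundary case), but the underlying idea is identical.
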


\begin{proof}
It is always possible to choose an $\alpha_\epsilon > 1$ such that $u_1^\epsilon(x) = (u_1(x)-u_1(x_0))^{\alpha_\epsilon}$ meets the hypothesis of Proposition \ref{Prop:EMeps} for every $\epsilon > 0$.
\end{proof}

\begin{corollary}
The method $\M_{w}(\vec{u},x_0) = <w,\vec{u}-\vec{u}(x_0)> + \ln(I(\F(\vec{u},x_0)))$, with $w\in W^+_n$ is efficiently manipulable.
\end{corollary}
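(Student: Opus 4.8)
The plan is to reduce the statement to the sufficient condition of Proposition \ref{Prop:EMeps}: for each manipulating party I will exhibit a one-parameter family of reports that leaves the Pareto frontier untouched while driving the outcome arbitrarily close to that party's ideal point inside the joint improving region. First I would unwind the outcome of $\M_w$. Since $\ln(I(\F(\vec u,x_0)))$ equals $0$ on $\F(\vec u,x_0)$ and $-\infty$ off it, and since $\vec u(x_0)$ is constant in $x$, Equation \ref{Eq:fins} gives
$$\S(\M_w,\vec u,x_0)=\arg\max_{x\in\F(\vec u,x_0)}\langle w,\vec u(x)\rangle.$$
Taking $w$ with strictly positive components (which is also what is needed for efficiency), any maximizer of $\langle w,\vec u\rangle$ over the convex region $\F(\vec u,x_0)$ cannot be Pareto dominated: a dominating $y$ would still lie in $\F(\vec u,x_0)$ and would strictly raise the weighted sum. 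Thus $\M_w$ is efficient, in accordance with the Geoffrion characterization recalled before the definition of Pareto efficiency, so that Proposition \ref{Prop:EMeps} is applicable.

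Next, fix $i$ and set $x_i^*=\arg\max_{x\in\F(\vec u,x_0)}u_i(x)$. I claim the scaling report $u_i^\lambda=\lambda\,u_i$, $\lambda>0$, is frontier-preserving and pushes the outcome toward $x_i^*$. Preservation is immediate, exactly as in the egalitarian corollary: $\{\lambda u_i\ge c\}=\{u_i\ge c/\lambda\}$, so the super-level sets of $u_i^\lambda$ coincide as sets with those of $u_i$, the ordering $\leq_{u_i}$ is unchanged, and hence $\P(u_i^\lambda,\vec u_{-i})=\P(\vec u)$; moreover $\{\lambda u_i\ge \lambda u_i(x_0)\}=\{u_i\ge u_i(x_0)\}$, so $\F(u_i^\lambda,\vec u_{-i},x_0)=\F(\vec u,x_0)$ and the feasible region of the manipulated problem is exactly the true joint improving region.

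Writing $x_\lambda=\S(\M_w,u_i^\lambda,\vec u_{-i},x_0)=\arg\max_{x\in\F(\vec u,x_0)}[\lambda w_i u_i(x)+\sum_{j\ne i}w_j u_j(x)]$ and comparing its value with the value at $x_i^*$ yields $w_i\lambda\,(u_i(x_i^*)-u_i(x_\lambda))\le \sum_{j\ne i}w_j(u_j(x_\lambda)-u_j(x_i^*))$, whose right-hand side is bounded on the compact set $\F(\vec u,x_0)$. Since $w_i>0$, letting $\lambda\to\infty$ forces $u_i(x_\lambda)\to u_i(x_i^*)=\max_{\F(\vec u,x_0)}u_i$, and by uniqueness of this maximizer together with continuity of the super-level structure, $x_\lambda\to x_i^*$. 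Hence for every $\epsilon>0$ a large enough $\lambda$ produces $u_i^\epsilon:=\lambda u_i$ with $|x_\lambda-x_i^*|<\epsilon$, which is precisely the hypothesis of Proposition \ref{Prop:EMeps}. The scaling also shows every party with positive weight is in $C(\M)$, so this construction covers all relevant parties.

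Finally I would note that $x_i^*$ is the genuine ceiling for party $i$: every admissible report leaves the opponents' constraints $\{u_j\ge u_j(x_0)\}$, $j\ne i$, intact, so every outcome lies in $\bigcap_{j\ne i}\F(u_j,x_0)$, over which $u_i$ is maximized exactly at $x_i^*$ (the $i$-th constraint being slack there). Thus the scaling family is both asymptotically optimal and frontier-preserving, and Proposition \ref{Prop:EMeps} delivers efficient manipulability. The delicate step, exactly as in the two bargaining corollaries, is the limit $x_\lambda\to x_i^*$: it leans on the strict quasi-concavity packed into the definition of $\U$ to guarantee both that $x_i^*$ is unique and that convergence of the values $u_i(x_\lambda)$ forces convergence of the points; the positivity of $w_i$ is what makes the perturbation $\lambda w_i u_i$ dominate and is simultaneously what is needed for $\M_w$ to be efficient in the first place.
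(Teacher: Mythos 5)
Your proof is correct and follows essentially the route the paper intends: the paper's own proof is the single remark that the argument ``is similar to the proof of Corollary~\ref{Cor:EgEM}'', i.e.\ verify Proposition~\ref{Prop:EMeps} by reporting a rescaled copy of one's true utility, which preserves both $\F(\vec{u},x_0)$ and the Pareto frontier while steering $\S(\M_w,\cdot)$ toward $\arg\max_{x\in\F(\vec{u},x_0)}u_i(x)$. You simply carry this out explicitly, with the scaling direction reversed ($\lambda\to\infty$ rather than $a\to 0$) as the change of aggregator from $\min$ to a weighted sum requires, and you correctly flag the same implicit assumptions (strictly positive weights, uniqueness of the constrained maximizer) on which the paper's own Proposition~\ref{Prop:EMeps} already relies.
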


\begin{proof}
It is similar to the proof of Corollary \ref{Cor:EgEM}.
\end{proof}

\begin{example}\label{Ex3}
The ANM, in example \ref{Ex2}, $\M(\vec{u},x_0) = u_i\cdot I(\F(\vec{u},x_0))$ is not efficiently manipulable. It suffices to see it in 2-party with $\D$ planar ($m = 2$). In fact if $\M(u_1,u_2,x_0) = u_2\cdot I(\F(u_1,u_2,x_0))$ then to find the best manipulation of party 1 is equivalent to solve the following variational problem 
\begin{equation}
V = \left\{
\begin{array}{l}
\arg\sup\limits_{\I\in C^1([0,1]\rightarrow\D)} u_{j}\left(\arg\sup\limits_{x\in\I_{x_0}} u_i(x) \right) \\
\I(0) = x_0 \\
\I(1) \in\partial\D\\
\I(y) \geq \I(x) + <\nabla \I(x),(x-y)>\\
<\nabla\I(0),\nabla u_2(0)>\, > 0
\end{array}
\right.
\end{equation}
by finding the right shape of the possible indifference curves $\I$ of $x_0$, which has not necessarily a Pareto preserving solution, meaning that the preferred point of party 2 on $\I$ is not necessarily on $\P(u_1,u_2)$.
\end{example}

\begin{theorem}\label{Thm:EP}
If $\M$ is efficient then manipulatory behaviors do not affect its actual efficiency if and only if $\M$ is efficiently manipulable.
\end{theorem}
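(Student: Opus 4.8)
The plan is to read the theorem as the equivalence of two statements about the rationally chosen (true-utility-maximising) manipulation $u_i^*$ of a party $i\in C(\M)$: on one side, that the realised settlement $\S(\M,u_i^*,\vec{u}_{-i},x_0)$ stays on the \emph{true} Pareto set $\P(\vec{u})$ (``actual efficiency is not affected''); on the other, the defining clause of efficient manipulability, $\P(u_i^*,\vec{u}_{-i})=\P(\vec{u})$. The bridge between the two is the first Proposition of Subsection \ref{Sec2.1}: membership in $\P$ is characterised purely by the separability of the improving regions $\F(\cdot,x)$, hence $\P$ depends only on the super-level-set (ordinal) structure of the utilities. Consequently a manipulation preserves the frontier exactly when it preserves party $i$'s preference ordering, and in particular every positive reparametrisation of $u_i$ keeps $\P(u_i^*,\vec{u}_{-i})=\P(\vec{u})$ while still being free to move the cardinal outcome of $\M$.

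First I would dispatch the easy implication ($\Leftarrow$). Assuming efficient manipulability, fix $(\vec{u},x_0)$ and $i\in C(\M)$ and take the optimal manipulation $u_i^*$ furnished by the definition, so that $\P(u_i^*,\vec{u}_{-i})=\P(\vec{u})$. Because $\M$ is efficient, $\S(\M,u_i^*,\vec{u}_{-i},x_0)\in\P(u_i^*,\vec{u}_{-i})=\P(\vec{u})$, i.e.\ the outcome is actually efficient. Since all maximisers of $u_i(\S(\M,\cdot,\vec{u}_{-i},x_0))$ yield party $i$ the same true payoff, a rational $i$ is indifferent among them and may adopt this frontier-preserving one; hence no rational manipulation need disturb $\P(\vec{u})$.

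For the converse ($\Rightarrow$) I would argue by contraposition. Suppose $\M$ is not efficiently manipulable: there are $(\vec{u},x_0)$ and $i\in C(\M)$ for which every optimal manipulation satisfies $\P(u_i^*,\vec{u}_{-i})\neq\P(\vec{u})$. The quantity to beat is the payoff ceiling $u_i(x_i^*)$ with $x_i^*=\arg\max_{x\in\F(\vec{u},x_0)}u_i(x)$: every non-defaulting outcome lies in $\F(\vec{u},x_0)$, so no manipulation can raise $i$'s true utility above $u_i(x_i^*)$, and one checks, using the super-level-set hypotheses on $\U$ together with the uniqueness of the maximiser, that $x_i^*\in\P(\vec{u})$. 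The key point is that $x_i^*$ cannot be the realised optimum here: were the supremum attained at $x_i^*\in\P(\vec{u})$, it could be reached by a mere positive reparametrisation of $u_i$ — an ordinally faithful, hence frontier-preserving, declaration — contradicting non-efficient-manipulability. Therefore the realised optimal outcome $\bar x=\S(\M,u_i^*,\vec{u}_{-i},x_0)$ falls strictly below the true-efficient ceiling and, sitting on the distorted frontier $\P(u_i^*,\vec{u}_{-i})\neq\P(\vec{u})$ by efficiency of $\M$, is dominated under $\vec{u}$, so $\bar x\notin\P(\vec{u})$: actual efficiency is lost.

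The main obstacle is precisely this last inference in the converse: upgrading the \emph{pointwise} datum ``the realised outcome is actually efficient'' to the \emph{global} frontier equality demanded by the definition. The delicate case is an outcome that happens to sit on both the true and the distorted frontier although the two differ globally; ruling it out amounts to proving that attaining party $i$'s true-efficient optimum can always be arranged by a preference-order-preserving manipulation, so that any genuinely forced distortion must cost actual efficiency. I would secure this with a limiting construction in the spirit of Proposition \ref{Prop:EMeps}: using the ordinal invariance of $\P$ and the continuity of $\M$ from Definition \ref{Def:UPC}, drive a sequence of positive reparametrisations $u_i^\epsilon$ of $u_i$ whose outcomes converge to $x_i^*$ while keeping $\P(u_i^\epsilon,\vec{u}_{-i})=\P(\vec{u})$, exactly as in the egalitarian and Nash corollaries, making reachability of the true-efficient ceiling equivalent to frontier preservation.
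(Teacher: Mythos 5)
Your backward direction is close in spirit to the paper's, but narrower: the paper lets \emph{all} parties in $C(\M)$ play their best manipulations simultaneously and chains the frontier equalities
$\P(u^*_{i_1},\vec{u}_{-i_1}) = \P(u^*_{i_1},u^*_{i_2},\vec{u}_{-\{i_1,i_2\}}) = \ldots = \P(\vec{u})$,
using that efficient manipulability holds for \emph{every} profile, hence also for the already-manipulated ones. You only treat a single manipulator, so you never address the composite manipulated profile on which $\M$ is actually run when several colluders deviate at once.

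The forward direction contains a genuine gap. Your pivotal step is: ``were the supremum attained at $x_i^*\in\P(\vec{u})$, it could be reached by a mere positive reparametrisation of $u_i$.'' This is false for a general efficient ANM. Ordinal invariance of $\P$ tells you a reparametrisation preserves the frontier; it tells you nothing about where it moves the \emph{outcome} $\S(\M,\cdot,\vec{u}_{-i},x_0)$. The Pareto-invariant method $\M_\P$ of Section \ref{Sec5} is an explicit counterexample: it returns the same settlement for every frontier-preserving declaration, so no reparametrisation (nor any limiting sequence of them, as in Proposition \ref{Prop:EMeps}, which is only a \emph{sufficient} condition verified method-by-method for the egalitarian and Nash solutions) can push the outcome toward $x_i^*$. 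Consequently your contrapositive does not close: you cannot rule out the ``delicate case'' you yourself flag, namely an optimal manipulation whose outcome happens to land on $\P(\vec{u})$ even though $\P(u_i^*,\vec{u}_{-i})\neq\P(\vec{u})$. The paper escapes this by reading the hypothesis as quantifying over \emph{all} manipulatory behaviors of the colluders: fixing $i$'s best manipulation $u_i^*$ and letting the remaining parties in $C(\M)$ range over arbitrary (not necessarily optimal) manipulations, every resulting settlement must lie in $\P(\vec{u})$, and this family of outcomes is what forces the global equality $\P(u_i^*,\vec{u}_{-i})=\P(\vec{u})$. That use of the other parties' declarations as a probe of the distorted frontier is the missing idea in your argument.
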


\begin{proof} $(\Leftarrow)$ Let $\vec{u}^*_{C(\M)}$ be the best manipulatory response of the parties in $C(\M) = \{i_1,\ldots,i_j\}$ then by the definition of efficient manipulability 
$$\P(u^*_{i_1},\vec{u}_{-i_1}) = \P(u^*_{i_1},u^*_{i_2},\vec{u}_{-\{i_1,i_2\}}) = \ldots = \P(\vec{u}^*_{C(\M)},\vec{u}_{-C(\M)}) = \P(\vec{u}).$$
$(\Rightarrow)$ If manipulatory behaviors do not affect the efficiency of the method, then let us fix $i \in  C(\M)$ and its best manipulation $u_i^*$ then, for every possible manipulation, not necessarily the best one, of the others $\vec{u}'_{C(\M)\setminus\{i\}}$, we have that $\S(\M,u_i^*,\vec{u}'_{C(\M)\setminus\{i\}},\vec{u}_{-C(\M)},x_0) \in \P(\vec{u})$ which implies that $\P(u_i^*,\vec{u}_{-i}) = \P(\vec{u})$, by the arbitrarily and $\vec{u}'_{C(\M)\setminus\{i\}}$ and the theorem by the arbitrarily of $i\in C(\M)$.
\end{proof}

\begin{remark}
If an ANM is efficiently manipulable, then there is no rational reason for a party to manipulate the negotiation by changing the Pareto frontier. In fact any change of the Pareto frontier would lead to a stronger loss when the other parties manipulate the negotiation too.
\end{remark}

If a negotiation method is efficiently manipulable, then its efficiency is preserved by means of Theorem \ref{Thm:EP}, thus we now focus on the possible situations those the efficiency of a method is actually compromised by manipulatory behaviors. 

Notice that we use the term \emph{actual inefficiency} to underline that the method, which is proven to be efficient, does not loose its property to end with a Pareto efficient settlement but the inefficiency is the result of a malicious change of the Pareto frontier whose points are inefficient with respect to the real one.

\section{Actual Inefficiency}\label{Sec5}

Let us firstly present the strongest notion of Actual Inefficiency and then an example of a method which satisfies it.

\begin{definition}
Any efficient ANM $\M$ is \emph{Strong Actual Inefficient} if and only if it is not efficiently manipulable over $\U$.
\end{definition}

Let us consider the 2-party budget allocation problem and for any point $x_0\in\D$ and any $u\in\U$ we define the \emph{strictly improving frontier} of $u$ at $x_0$ the set
\begin{equation}
B^u_{x_0} = \left\{x\in\D| x=x_0+\bar\lambda (y-x_0),\,\\ \bar\lambda=\arg\sup\limits_{\lambda\geq0}u(x_0+\lambda (y-x_0)),\, y\in\D \right\}.
\end{equation}

If $x\in B^u_{x_0}$ then the line $\overline{xx_0}$ supports $\F(u,x_0)$ at $x_0$, furthermore if $u_1,\,u_2\in \U$ and $x_0$ is strictly preferred by $u_1$ to the best outcome of $u_2$ and {\it vice versa}, then any $x^*\in B^{u_1}_{x_0}\cap B^{u_2}_{x_0}$, with $x^*\neq x_0$, lies on $\P(u_1,u_2)$. Moreover $x^*$ holding the aforementioned properties is unique whenever $u_1,\,u_2\in C^1(\D)$.

This simple argument gives us the possibility to present within this context a simple 2-party Pareto invariant ANM 
\begin{equation}
\M_\P(u_1,u_2,x_0) = 
\left\{
\begin{array}{l l}
B^{u_1}_{x_0}\cap B^{u_2}_{x_0} \setminus \{x_0\} & \mbox{if } u_1,\,u_2\in C^1(\D) \\
\arg\sup\limits_{x\in B^{u_1}_{x_0}\cap B^{u_2}_{x_0}} |x-x_0| & \mbox{otherwise.}
\end{array}
\right.
\end{equation}

$\M_\P$, despite its easy implementability, has a strong theoretical value since it is never efficiently manipulable. In fact it is not difficult to check that for any $\tilde{u}_1$, such that $\P(\tilde{u}_1,u_2) = \P(u_1,u_2)$, it results $\M_\P(\tilde{u}_1,u_2,x_0) = \M_\P(u_1,u_2,x_0)$.

Moreover $C_1(\M_\P,u_2,x_0) = B^{u_2}_{x_0}$, thus to maximize the manipulation outcome of party 1 is equivalent to find $x^* = \arg\sup\limits_{x\in B^{u_2}_{x_0}} u_1$.

\subsection{Weak Actual Inefficiency}\label{Sec6}

We now explore a weaker form of actual inefficiency which natural arises in all those contexts, like economics, where there is an active speculation about the properties of utility functions with consequent constrains of the utilities' space.

\begin{definition}
Any efficient ANM $\M$ is \emph{Weak Actual Inefficient} on $\U'$ if and only if it is efficiently manipulable over $\U\supsetneq\U'$ but not over $\U'$.
\end{definition}

The following theorem characterizes a prototypical space $\U'$ in which the weak actual inefficiency arises.

\begin{theorem}\label{ThmUpr}
If $\U'$ is the space of convex combinations of \\$U = (U^1,\ldots,U^l)\in\U^l$ and the following properties hold 
\begin{enumerate} 
\item $U^j(\arg\sup(U^i)) = \inf(U^j)$ and $\arg\sup(U^i)\in\partial\D$, $\forall i,j\in\{1,\ldots,l\}$, $i\neq j$,
\item $\exists m:\D\rightarrow W^+_l$ continuous and bijective such that \\$x = \arg\sup_{y\in\D} m(x)\cdot U(y)$,
\end{enumerate}
then any improving 2-party ANM $\M$ is weakly actual inefficient on $\U'$, whenever there is perfect competition among parties.
\end{theorem}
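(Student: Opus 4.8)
The plan is to exploit hypotheses 1 and 2 to obtain a completely rigid description of the Pareto frontier of any pair of utilities drawn from $\U'$, and to read off from that description two facts: inside $\U'$ the only frontier-preserving declaration is the truthful one, whereas inside the larger $\U$ there is enough curvature freedom to steer the outcome without touching the frontier. Writing a generic element of $\U'$ as $u=a\cdot U$ with $a\in W^+_l$, condition 2 says that $m^{-1}(w)=\arg\sup_{y\in\D} w\cdot U(y)$, and combined with condition 1 it forces $m(\arg\sup U^i)=e_i$, so that the vertices of the simplex $W^+_l$ correspond under $m^{-1}$ to the mutually opposed boundary ideal points. I would read \emph{perfect competition} as the requirement that the two parties sit at distinct weight vectors $a\neq b$ with $x_0$ strictly preferred by each party to the opponent's ideal; this is exactly the configuration of the proof of Theorem \ref{Thm:2man}, so the method is manipulable with a \emph{strict} gain. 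If $\M$ is only improving and acyclic rather than efficient, I would first pass via Theorem \ref{Thm:limit} to its Pareto efficient iteration, so that both Theorem \ref{Thm:2man} and the definition of efficient manipulability apply.

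The key step is the frontier formula. For $u_1=a\cdot U$ and $u_2=b\cdot U$ in $\U'$, Geoffrion's characterisation gives $x\in\P(u_1,u_2)$ iff $x=\arg\sup_x(\lambda u_1+(1-\lambda)u_2)$ for some $\lambda\in[0,1]$; since $\lambda u_1+(1-\lambda)u_2=(\lambda a+(1-\lambda)b)\cdot U$, condition 2 rewrites this as $x=m^{-1}(\lambda a+(1-\lambda)b)$, so that $\P(u_1,u_2)=m^{-1}(\overline{ab})$, the $m^{-1}$-image of the segment joining $a$ and $b$ in $W^+_l$. Because $m$ is a bijection, a manipulation $\tilde u_1=\tilde a\cdot U$ of party $1$ preserves the frontier iff $\overline{\tilde a\,b}=\overline{ab}$; as these are two segments sharing the endpoint $b$, this forces $\tilde a=a$, i.e. truthfulness.

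Consequently the unique frontier-preserving declaration available inside $\U'$ yields the un-manipulated outcome $\S(\M,u_1,u_2,x_0)$. On the other hand the supremum $\sup_{\tilde a\in W^+_l} u_1(\S(\M,\tilde a\cdot U,u_2,x_0))$ is attained, since $W^+_l$ is compact and the outcome map is continuous, and by the strict manipulability established above it is strictly larger than $u_1(\S(\M,u_1,u_2,x_0))$. Hence every best response uses some $\tilde a\neq a$ and therefore moves the frontier, which is precisely the statement that $\M$ is \emph{not} efficiently manipulable over $\U'$.

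To finish I would show that $\M$ is efficiently manipulable over the full space $\U$, closing the definition of weak actual inefficiency. Here party $1$ is no longer confined to convex combinations of $U$, so it may reshape the curvature of its declared level sets while keeping the family of supporting hyperplanes along the frontier equal to that of $u_1$; by the separation criterion of the Proposition at the start of Subsection \ref{Sec2.1} this leaves $\P(\cdot,u_2)$ unchanged. Exactly as in Proposition \ref{Prop:EMeps} and its corollaries, one then constructs $u_1^\epsilon\in\U$ with $\P(u_1^\epsilon,u_2)=\P(u_1,u_2)$ and $\S(\M,u_1^\epsilon,u_2,x_0)\to\arg\max_{x\in\F(\vec u,x_0)}u_1(x)$, so that a frontier-preserving best response attains party $1$'s optimum in $\F(\vec u,x_0)$. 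I expect the main obstacle to be this final paragraph for a \emph{completely general} improving $\M$: controlling which point of the frontier an arbitrary improving method selects in response to a reshaped utility cannot be done by an exact construction and must go through the limiting device of Proposition \ref{Prop:EMeps}, and one must verify that the reshaping stays inside $\U$ while holding the supporting-hyperplane family fixed.
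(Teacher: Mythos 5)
Your proof takes a genuinely different route from the paper's, and on the whole a better-documented one. The paper's entire argument is: under perfect competition and bijectivity of $m$, the intersection $C_1(\M,u_2,x_0)\cap C_2(\M,u_1,x_0)\cap\P(u_1,u_2)$ reduces to the single point $\S(\M,\vec u,x_0)$ ``like in the proof of Theorem \ref{Thm:2man}'', so any gainful manipulation lands off the true frontier; the half of the definition of weak actual inefficiency requiring $\M$ to be efficiently manipulable over the ambient $\U$ is not addressed at all. You instead derive the rigidity formula $\P(a\cdot U,b\cdot U)=m^{-1}(\overline{ab})$ and note that two segments of $W^+_l$ sharing the endpoint $b$ coincide only when their free endpoints do, so the unique frontier-preserving declaration inside $\U'$ is the truthful one; and you explicitly supply the $\U$-half via the Proposition \ref{Prop:EMeps} device. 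What your route buys is checkability: the frontier formula makes the ``any manipulation moves the frontier'' claim a one-line consequence of injectivity of $m$, where the paper's $C_i$-based assertion is left unproved. One small caveat: the inclusion $\P(u_1,u_2)\subseteq m^{-1}(\overline{ab})$ rests on Geoffrion's characterization, hence on concavity of the $U^i$, which the theorem's hypotheses do not state (condition 2 comes close to assuming it).

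There is, however, one genuine gap: your appeal to Theorem \ref{Thm:2man} to get a \emph{strict} gain achievable \emph{inside} $\U'$. That theorem produces a profitable deviation ranging over $\{u\in\U\mid\arg\max u\in\partial\D\}$ (which the paper confusingly also names $\U'$), and nothing guarantees the deviation can be realized by another convex combination $\tilde a\cdot U$. If every profitable deviation requires leaving the convex-combination space, then the best response within $\U'$ is the truthful declaration, which preserves the frontier, and ``not efficiently manipulable over $\U'$'' fails. To close this you would need to rerun the supporting-hyperplane argument of Theorem \ref{Thm:2man} on the manipulation-outcome sets computed with declarations restricted to $\U'$ (equivalently, to show that the curve $\tilde a\mapsto\S(\M,\tilde a\cdot U,b\cdot U,x_0)$ already exits the half-space $\{u_1\le u_1(\S(\M,\vec u,x_0))\}$). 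The paper's own proof never says over which space its $C_i$'s are computed, so this is a soft spot you inherited rather than introduced, but it is the step that actually needs an argument.
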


\begin{proof}
By the perfect competition assumption, we have that $u_1,\,u_2$ have maxima $\hat{u}_1,\,\hat{u}_2\in\partial \D$, $u_1(\hat{u}_2) = \inf(u_1)$ and {\it vice versa}. Moreover, since $m$ is one-to-one and continuous, $C_1(\M,u_2,x_0) \cap C_2(\M,u_1,x_0) \cap \P(u_1,u_2) = \{\S(\M,u_1,u_2,x_0)\}$, like in the proof of Theorem \ref{Thm:2man}. Thus any manipulation compromises the efficiency of $\M$.
\end{proof}

\begin{example}
Let us consider again the 2-party budget splitting problem and, like in \cite{Harri2001,Ehtamo200154,740667,FRutilities}, let us think at the space $\U'$ consisting in all the convex combinations of the following utility functions $U^1 = \ln(x_1)$, $U^2 = \ln(x_2)$ and $U^3 = \ln(B-x_1-x_2)$. Clearly $\U'$ satisfies the hypotheses of Theorem \ref{ThmUpr}, thus any efficient method, included the ones presented in the aforementioned articles, are weakly actual inefficient on $\U'$. 
\end{example}

Despite strong actual inefficiency cannot be overcame since it is a property that globally holds on $\U$, weak actual inefficiency is local on $\U'\subset\U$ and can be eliminated by enlarging $\U'$. 

If $\U'$ induces weak actual inefficiency of an efficiently manipulable $\M$, then there are two different scenarios
\begin{enumerate}
\item parties are forced to stick with utilities in $\U'$, like for example in all those software implementations where party can only express the parameters $a\in W^+_l$ which codifies their utilities;
\item parties can ``escape'' from $\U'$.
\end{enumerate}

If the first scenario holds then the weak actual inefficiency becomes substantially a strong actual inefficiency. Otherwise in the latter scenario parties are going to falsify the theoretical hypotheses which led to identify the class $\U'$ by picking a manipulating utility outside $\U'$ even if their real one is in $\U'$, meaning that the theoretical hypotheses are true.

Let us call the triplet $M = (\D,\U',\M)$ a model of interaction in $\D$, with preferences in $\U'$ and according to the method $\M$, then the next theorem codifies the latter scenario.

\begin{theorem}
Let $M = (\D,\U',\M)$ be a model of the interaction within an $n$-party system $S$. If $\U\supsetneq \U'$ exists, such that $\M$ is weak actual inefficient on $\U'$, then the theoretical results obtained by applying $M$ are going to be inconsistent with the actual dynamic of $S$, whenever there is a manipulatory behavior.
\end{theorem}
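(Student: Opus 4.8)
The plan is to trace the rational behaviour of a manipulating party and show that it necessarily escapes the model's declared domain $\U'$, so that the settlement the model predicts can never coincide with the one actually realised. First I would unfold the hypothesis: by weak actual inefficiency, $\M$ is efficiently manipulable over $\U$ but \emph{not} over $\U'$. Since a manipulatory behaviour is assumed, I invoke the remark following Theorem \ref{Thm:EP}: because $\M$ is efficiently manipulable over $\U$, the rational manipulation is the one that \emph{preserves} the real Pareto frontier, for any frontier-shifting declaration would, by that remark, expose the manipulator to a strictly larger loss once the opponent reacts in kind. Let $u_i^*$ denote such a best, frontier-preserving response of the manipulating party $i$.

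The key step is to locate $u_i^*$. Since $\M$ is not efficiently manipulable over $\U'$, no best response drawn from $\U'$ leaves $\P(\vec u)$ unchanged; hence the frontier-preserving optimum $u_i^*$ cannot belong to $\U'$, so necessarily $u_i^*\in\U\setminus\U'$. I would make this quantitative exactly as in the proof of Theorem \ref{ThmUpr}, where $C_1(\M,u_2,x_0)\cap C_2(\M,u_1,x_0)\cap\P(u_1,u_2)$ collapses to the single truthful point $\S(\M,\vec u,x_0)$: any strictly improving manipulation confined to $\U'$ must therefore abandon the frontier. Consequently a rational agent whose \emph{true} utility lies in $\U'$ will nonetheless \emph{declare} the utility $u_i^*\notin\U'$, which is precisely the second scenario described just before the statement.

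I would then contrast the two outcomes. The model $M=(\D,\U',\M)$ derives its prediction under the standing constraint that every relevant utility lies in $\U'$; applied to rational play it confines party $i$ to a best response inside $\U'$ and thereby forecasts a frontier-shifting, \emph{actually inefficient} settlement (equivalently, on $\U'$ the method behaves as a strong actual inefficient one, since it fails to be efficiently manipulable there). The actual dynamic of $S$, however, realises $\S(\M,u_i^*,\vec u_{-i},x_0)\in\P(u_i^*,\vec u_{-i})=\P(\vec u)$, a genuinely efficient outcome that the model cannot reproduce, because the declaration $u_i^*$ generating it lies outside the model's universe $\U'$. The two settlements disagree, and the disagreement persists even though the hypothesis that the real preferences belong to $\U'$ is exact — this is the claimed inconsistency.

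The main obstacle I anticipate is not a calculation but the task of pinning the informal terms down: one must fix what ``theoretical result of $M$'' and ``actual dynamic of $S$'' denote precisely enough to compare them. Once the convention is adopted that the model reasons only with utilities in $\U'$ while the realised play admits any declaration in $\U$, the technical content reduces to the two facts already in hand — that the efficient manipulation must live in $\U\setminus\U'$ (immediate from weak actual inefficiency) and that, restricted to $\U'$, the best response strictly moves off $\P(\vec u)$ (the singleton-intersection argument of Theorem \ref{ThmUpr}) — and the inconsistency follows.
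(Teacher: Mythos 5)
Your argument is essentially the paper's own: the authors state this theorem without a formal proof, justifying it only by the informal two-scenario discussion that precedes it, and your proposal is a faithful formalization of exactly that discussion (the rational, frontier-preserving manipulation $u_i^*$ must lie in $\U\setminus\U'$, so the realised declaration falsifies the model's constraint and the predicted and actual settlements diverge even though the true preferences do lie in $\U'$). The only caveat is the one you already flag yourself — the statement's terms ``theoretical results'' and ``actual dynamic'' are never pinned down in the paper either, so your reading is as precise as the source permits.
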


Moreover, if interactions between the parties of a systems are done according to an efficient manipulable ANM $\M$, then nothing more then the obvious can be said about the dynamic of the systems. In fact, it is possible to say no more that it is going to converge on the Pareto efficient fortier, which is trivial by the efficiency of $\M$. In addition there is no way to identify the real method that the system uses to gain efficiency since it is impossible to discriminate between any two ones which are efficiently manipulable.

\section{Conclusions}
We introduced the notion of abstract negotiation method which, in essence, captures some of the direct and recursive negotiation methods which are well-known in literature. We showed that if the {\it status quo} is endogenous to the negotiation domain then efficiency always implies manipulability by either parties or collusion of them.

Then we focused on the effects those manipulatory behaviors have on the real efficiency of efficient negotiation methods. In exploring the topic, we derive necessary and sufficient conditions for a manipulation to do not alter the real Pareto efficient frontier; then we focus in looking for all those situations under which this is not true. So we show that there are methods, like the presented Pareto invariant one, in which it is not possible to manipulate efficiently a negotiations, and also that there are subspaces of utility functions in which manipulations always compromise the real efficiency. Regarding the latter situation we underline that it is possible to overcome the actual inefficiency by allowing parties to exit from the aforementioned space of utility.

There are several possibility to extend the results presented in this contribution, which we are going to investigate. One over all: to overcome the impossibility to negotiate efficiently and in a not manipulable way by renouncing to the determinism of the negotiation method.

\bibliographystyle{alpha}
\bibliography{MPN}
\end{document}